\newtheorem{lemma}{Lemma}[section]
\newtheorem{proposition}[lemma]{Proposition}
\newtheorem{theorem}[lemma]{Theorem}
\newtheorem{corollary}[lemma]{Corollary}
\newtheorem*{theoremA}{Theorem I}
\newtheorem*{corollaryA}{Corollary I}
\newtheorem*{theoremB}{Theorem II}
\newtheorem*{corollaryB}{Corollary II}
\theoremstyle{definition}
\newtheorem{definition}[lemma]{Definition}
\newtheorem{remark}[lemma]{Remark}
\newtheorem{question}[lemma]{Question}
\newcommand{\Hom}{\mathrm{Hom}}
\newcommand{\End}{\mathrm{End}}
\newcommand{\Aut}{\mathrm{Aut}}
\newcommand{\ox}{\otimes}
\newcommand{\qexp}{\mathrm{qexp}}
\newcommand{\upiv}{\mathrm{piv}}
\newcommand{\Npiv}{N\mathrm{piv}}
\newcommand{\Epiv}{E\mathrm{piv}}
\renewcommand{\mod}{\mbox{-\bf{mod}}}
\newcommand{\jj}{{j^{\mathrm{Vec}}}}
\def\1{\mathbf{1}}
\def\b{\beta}
\def\e{\epsilon}
\def\g{\gamma}
\def\s{\sigma}
\def\vp{\varphi}
\def\l{\lambda}
\def\k{\Bbbk}
\def\bm1{\mathbbm{1}}
\def\bV{\mathbf{V}}
\def\BQ{{\mathbb{Q}}}
\def\Id{{\operatorname{Id}}}
\def\BC{{\mathbb{C}}}
\def\BN{{\mathbb{N}}}
\def\nuk{\nu^{\mathrm{KMN}}}
\def\nus{\nu^{\mathrm{Sh}}}
\def\Hom{{\mbox{\rm Hom}}}
\def\End{{\mbox{\rm End}}}
\def\Aut{{\mbox{\rm Aut}}}
\def\tt{\tilde{t}}
\def\RH{\mathbf{R}_H}
\def\RK{\mathbf{R}_K}
\newcommand\C[1]{{#1\mbox{-\bf{mod}}}}
\def\Id{\operatorname{Id}}
\newcommand\Rep{\operatorname{Rep}}
\numberwithin{equation}{section}
\newcommand\ord{\operatorname{ord}}
\newcommand\ol[1]{\overline{#1}}
\newcommand\replace[1]{}
\newcommand\ptr{\operatorname{\underline{ptr}}}
\newcommand\id{\operatorname{id}}
\renewcommand\o{\otimes}
\renewcommand\L{\Lambda}
\newcommand\Tr{\operatorname{Tr}}
\newcommand\inv{^{-1}}
\DeclareMathOperator\ev{{\operatorname{ev}}}
\DeclareMathOperator\coev{{\operatorname{coev}}}
\DeclareMathOperator\piv{\operatorname{piv}}
\def\piv{{\operatorname{piv}}}
\newcommand\CC{\mathscr{C}}
\newcommand\CCp{\mathscr{C}^\piv}
\newcommand\CCNp{\mathscr{C}^{N\piv}}
\newcommand\CCEp{\mathscr{C}^{E\piv}}
\newcommand\DD{\mathscr{D}}
\newcommand\DDp{\mathscr{D}^\piv}
\newcommand\FF{\mathcal F}
\def\Vec{\operatorname{Vec}}
\newcommand\du{^{\vee}}
\newcommand\op{{\operatorname{op}}}
\title[]{Gauge invariants from the powers of antipodes}
\date{}
\author{Cris Negron}
\email{negronc@lsu.edu}
\thanks{The first author was supported by NSF Postdoctoral Fellowship DMS-1503147}
\address{Department of Mathematics\\Massachusetts Institute of Technology\\
Cambridge, MA 02139, USA}
\author{Siu-Hung Ng}
\thanks{The second author was partially supported by NSF grant DMS-1501179}
\email{rng@math.lsu.edu}
\address{Department of Mathematics, Louisiana State University, Baton Rouge, LA 70803, USA}
\begin{document}

\begin{abstract}
We prove that the trace of the $n$th power of the antipode of a Hopf algebra with the Chevalley property is a gauge invariant, for each integer $n$.  As a consequence, the order of the antipode, and its square, are invariant under Drinfeld twists.  The invariance of the order of the antipode is closely related to a question of Shimizu on the pivotal covers of finite tensor categories, which we affirmatively answer for representation categories of Hopf algebras with the Chevalley property.
\end{abstract}
\thanks{}
\maketitle

\section{Introduction}

The present paper is dedicated to a study of the traces of the powers of the antipode of a Hopf algebra, and an approach to the Frobenius-Schur indicators of nonsemisimple Hopf algebras.
\par
The antipode of a Hopf algebra has emerged as an object of importance in the study of Hopf algebras. It has been proved by Radford that the order of the antipode $S$ of any finite-dimensional Hopf algebra $H$ is finite \cite{Radf76}. Moreover, the trace of $S^2$ is nonzero if, and only if, $H$ is semisimple and cosemisimple \cite{LR88a}. If the base field $\k$ is of characteristic zero, $\Tr(S^2) = \dim H$ or $0$, which characterizes respectively whether $H$  is semisimple or nonsemisimple \cite{LR88}. This means semisimplicity of $H$ is characterized by the value of $\Tr(S^2)$. In particular, $\Tr(S^2)$ is an invariant of the finite tensor category $\C{H}$. The invariance of $\Tr(S^2)$ and $\Tr(S)$ can also be obtained in any characteristic via Frobenius-Schur indicators.
\par

A generalized notion of the $n$th Frobenius-Schur (FS-)indicator $\nuk_n(H)$ has been introduced in~\cite{KMN} for studying finite-dimensional Hopf algebras $H$, which are not necessarily semisimple or \emph{pivotal}. However, $\nuk_n(H)$ coincides with the $n$th FS-indicator of the regular representation of $H$ when $H$ is semisimple, defined in \cite{LM00}. These indicators are invariants of the finite tensor categories $\C{H}$. In particular,  $\nuk_2(H)=\Tr(S)$ and $\nuk_0(H)=\Tr(S^2)$ (cf. \cite{Shim15}) are invariants of $\C{H}$.
\par

The invariance of $\Tr(S)$ and $\Tr(S^2)$ alludes to the following question to be investigated in this paper:

\begin{question} \label{q1}
For any finite-dimensional Hopf algebra $H$ with the antipode $S$, is the sequence $\{\Tr(S^{n})\}_{n \in \BN}$ an invariant of the finite tensor category $\C{H}$?
\end{question}

For the purposes of this paper, we will always assume $\k$ to be an algebraic closed field of characteristic zero, and all Hopf algebras are finite-dimensional over $\k$.

Recall that a finite-dimensional Hopf algebra $H$ has the Chevalley property if its Jacobson radical is a Hopf ideal.  Equivalently, $H$ has the Chevalley property if the full subcategory of sums of irreducible modules in $H\mod$ forms a tensor subcategory.  We provide a positive answer to Question \ref{q1} for Hopf algebras with the Chevalley property.

\begin{theoremA}[Thm.~\ref{t:2}]
Let $H$ and $K$ be finite-dimensional Hopf algebras over $\k$ with antipodes $S_H$ and $S_K$  respectively. Suppose $H$ has the Chevalley property and that $\C{H}$ and $\C{K}$ are equivalent as tensor categories. Then we have
\[
\Tr(S^n_H) = \Tr(S^n_K)
\]
for all integers $n$.
\end{theoremA}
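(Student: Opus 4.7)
The plan is to express $\Tr(S_H^n)$ as a categorical invariant of $\C{H}$ by exploiting the Chevalley property to reduce to a computation on an associated graded Hopf algebra. Let $J = \operatorname{Jac}(H)$, which by hypothesis is a Hopf ideal, so $\ol H := H/J$ is semisimple. Since $S_H$ is an anti-automorphism of the underlying algebra, it stabilizes $J$ and all its powers, and hence preserves the filtration $H \supset J \supset J^2 \supset \cdots$. The trace of $S_H^n$ on $H$ therefore agrees with that of its associated graded on $\operatorname{gr} H = \bigoplus_i J^i/J^{i+1}$, and this associated graded is precisely the antipode $\operatorname{gr}(S_H)$ of the graded Hopf algebra. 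In short, $\Tr(S_H^n) = \Tr(\operatorname{gr}(S_H)^n)$ for every $n$.

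Next, the Radford--Majid bosonization theorem identifies $\operatorname{gr} H$ with a biproduct $R \# \ol H$, where $R$ is a braided Hopf algebra in the Yetter--Drinfeld category of $\ol H$. Under this identification the antipode is an explicit expression involving $S_{\ol H}$, the $\ol H$-coaction on $R$, and the braided antipode $S_R$. Computing iterates and tracing, one should obtain a formula for $\Tr(\operatorname{gr}(S_H)^n)$ as a sum over the $\ol H$-isotypic pieces of $R$, with each contribution a product of a trace involving $S_{\ol H}^n$ (which, because $\ol H$ is semisimple in characteristic zero, reduces by Larson--Radford either to $\dim \ol H$ or to $\nuk_2(\ol H) = \Tr(S_{\ol H})$) and a dimension datum of $R$.

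Gauge invariance is then deduced from two identifications. First, the tensor subcategory of semisimple objects of $\C{H}$ is canonically equivalent to $\C{\ol H}$, so $\ol H$ is determined up to gauge equivalence by $\C{H}$, and the FS-type indicators of $\ol H$ appearing in the previous step are already known to be gauge invariants. Second, the braided Hopf algebra $R$, together with its $\ol H$-module and $\ol H$-comodule structure, should be recoverable from the inclusion $\C{\ol H} \hookrightarrow \C{H}$ via a categorical construction (for instance a coend-type or central-Hopf-algebra argument), so in particular the multiplicities of simples of $\ol H$ in each graded piece $J^i/J^{i+1}$ are tensor-intrinsic. Combining these, every quantity entering the formula for $\Tr(\operatorname{gr}(S_H)^n)$ depends only on $\C{H}$, giving $\Tr(S_H^n) = \Tr(S_K^n)$.

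The principal obstacle is the last step: making the categorical reconstruction of the graded pieces $J^i/J^{i+1}$ precise, and compatible with the explicit biproduct formula for $\operatorname{gr}(S_H)^n$. This reconstruction is very close in spirit to Shimizu's question on pivotal covers highlighted in the abstract, and is where the heart of the paper's technical work most likely lies; a clean treatment should simultaneously resolve that question for Hopf algebras with the Chevalley property.
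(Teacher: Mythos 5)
Your opening reduction is sound: $S_H$ preserves the radical filtration, so $\Tr(S_H^n)=\Tr(\operatorname{gr}(S_H)^n)$, and $\operatorname{gr}H\cong R\,\#\,\ol{H}$ by Radford's biproduct theorem. But the step carrying all the weight --- the asserted formula expressing $\Tr(\operatorname{gr}(S_H)^n)$ as a sum, over $\ol{H}$-isotypic pieces of $R$, of products of a trace involving $S_{\ol{H}}^n$ with a ``dimension datum'' of $R$ --- is never derived, and its claimed shape is impossible. Since $\ol{H}$ is semisimple, $S_{\ol{H}}^2=\id$ by Larson--Radford, so each such product (trace factor depending on $n$ only through parity, dimension factor independent of $n$) is a $2$-periodic function of $n$, hence so is the proposed sum. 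Already for the Sweedler algebra $H_4=\k\langle g,x\rangle/(g^2-1,\,x^2,\,gx+xg)$ --- which has the Chevalley property, is radically graded with $\ol{H}=\k\BZ_2$ and $R=\k[x]/(x^2)$ --- one computes $\Tr(S^n)=2,0,2,4$ for $n\equiv 1,2,3,0$ modulo $4$: the sequence has period $4$, not $2$ (indeed, since $S_{\ol{H}}=\id$ here, your formula would be constant in $n$). Any correct biproduct formula must therefore involve genuinely $n$-dependent data of $R$ (powers of the braided antipode, intertwined with the action of grouplike elements), and your final step would then have to prove that \emph{those} quantities are gauge invariant --- which is essentially the original problem, transplanted into the Yetter--Drinfeld category of $\ol{H}$.

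The second gap you flag yourself, but it is larger than you suggest, and it is not where the paper's work lies. Gauge invariance cannot follow from the tensor-intrinsic nature of the multiplicities of simple $\ol{H}$-modules in $J^i/J^{i+1}$: a Drinfeld twist $F$ leaves the algebra structure of $H$ (hence all such multiplicities, and the whole filtration) completely untouched, while replacing $S$ by $S_F=\beta_F S(\cdot)\beta_F^{-1}$ as in \eqref{eq: antopode}; so the change in $\Tr(S^n)$, if any, is invisible to multiplicity data and is governed entirely by $\beta_F$. Controlling $\beta_F$ is exactly what the paper does, by a route that bypasses $\operatorname{gr}H$ and bosonization altogether: by \cite[Thm.~2.2]{NS2} one may assume $K=H^F$; then $S_F^n=l(u_n)r(u_n^{-1})S^n$ for explicit units $u_n$ built from $\beta_F$ and $\gamma_F=\beta_F S(\beta_F^{-1})$; pseudounitarity of fusion categories forces $S(\beta_{\ol{F}})=\beta_{\ol{F}}$ in the semisimple quotient (Theorem~\ref{t:1}), whence $S(\beta_F)\in\beta_F+J(H)$ (Lemma~\ref{l:reduction}); finally the nilpotence trace lemma (Lemma~\ref{l:trace}) and Radford's integral formula $\Tr(T)=\lambda(S(\Lambda_2)T(\Lambda_1))$ eliminate the residual dependence on $\beta_F$ in Theorem~\ref{t:2}. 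If you wish to salvage your strategy, the missing ingredient is precisely an analogue of this control of $\beta_F$ modulo $J(H)$; but once one has that, the passage to the associated graded and the biproduct becomes unnecessary.
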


In a categorial language, the theorem tells us that for any finite tensor category $\mathscr{C}$ with the Chevalley property which admits a fiber functor to the category of vector spaces, the ``traces of the powers of the antipode" are well defined invariants which are independent of the choice of fiber functor.  One naturally asks whether these scalars can be expressed purely in terms of categorial data of $\mathscr{C}$.
\par

Etingof has asked the question whether, for any finite-dimensional $H$, $\Tr(S^{2m})=0$ provided $\ord(S^2)\nmid m$~\cite[p186]{RS02}.  This question is affirmatively answered for pointed and dual pointed Hopf algebras in \cite{RS02}.  However, the odd powers of the antipode may have nonzero traces in general. We note that the above result covers both the even and odd powers of the antipode.
\par

 Theorem I also implies that the orders of the first two powers of the antipode of a Hopf algebra with the the Chevalley property are also invariants.

\begin{corollaryA}[Cor.~\ref{cor:ordpres}]
Let $H$ and $K$ be finite-dimensional Hopf algebras over $\k$ with antipodes $S_H$ and $S_K$  respectively. Suppose $H$ has the Chevalley property and that $\C{H}$ and $\C{K}$ are equivalent as tensor categories. Then $\ord(S_H)=\ord(S_K)$ and hence $\ord(S^2_H)=\ord(S^2_K)$.
\end{corollaryA}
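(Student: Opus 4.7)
The plan is to extract the order of the antipode directly from the trace sequence $\{\Tr(S^n)\}_n$, whose gauge-invariance is guaranteed by Theorem I. By Radford's theorem, $S_H$ has finite order, so every power $S_H^n$ is diagonalizable with eigenvalues on the unit circle. For any such operator $T$ on a $d$-dimensional $\k$-vector space, the triangle inequality $|\sum \lambda_i| \leq \sum |\lambda_i| = d$ forces the condition $\Tr(T) = d$ to imply that all eigenvalues of $T$ are equal to one fixed unit scalar; that scalar must then be $1$, so $T = \id$. Specialized to $T = S_H^n$, this yields the intrinsic characterization
\[
\ord(S_H) = \min\bigl\{\, n \geq 1 : \Tr(S_H^n) = \dim_\k H \,\bigr\},
\]
and analogously for $K$.

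The remaining ingredient is the equality $\dim_\k H = \dim_\k K$. This follows from the tensor equivalence $\C{H} \simeq \C{K}$, since $\dim_\k H$ agrees with the Frobenius-Perron dimension $\FPdim(\C{H})$ of the finite tensor category, and this quantity is preserved by any tensor equivalence. Combining this equality with Theorem I, the minima defining $\ord(S_H)$ and $\ord(S_K)$ are computed from the same sequence of scalars and the same threshold value, and so must agree.

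For the second assertion, I would observe that $\ord(S_H^2) = \ord(S_H)/\gcd(\ord(S_H), 2)$ is a function of $\ord(S_H)$ alone, so the equality $\ord(S_H) = \ord(S_K)$ immediately forces $\ord(S_H^2) = \ord(S_K^2)$. There is no genuine obstacle here: the substantive content of the corollary is already packaged inside Theorem I, and the only auxiliary point to verify is the trace-to-order dictionary in the first paragraph, together with the categorical interpretation of $\dim_\k H$ as a Frobenius-Perron dimension.
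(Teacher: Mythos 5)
Your proof is correct and follows essentially the same route as the paper: both characterize $\ord(S)$ as the least $n\geq 1$ with $\Tr(S^n)=\dim H$ (valid in characteristic zero because $S$ has finite order by Radford), and then combine this dictionary with Theorem I and the equality $\dim H=\dim K$. The only cosmetic differences are that the paper obtains $\dim H=\dim K$ from the algebra isomorphism $K\cong H^F$ underlying the gauge equivalence rather than from invariance of Frobenius--Perron dimension, and it settles the $S^2$ statement by noting that an antipode of odd order must be the identity, whereas your identity $\ord(S^2)=\ord(S)/\gcd(\ord(S),2)$ finishes the argument just as well.
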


The order of $S^2$ is related to a known invariant called the {\it quasi-exponent} $\mathrm{qexp}(H)$  \cite{EGqexp02}.  Namely, $\mathrm{ord}(S^2)$ divides $\mathrm{qexp}(H)$ for any finite-dimensional Hopf algebra.  However, we still do not know whether or not the order of $S^2$ is an invariant in general.
\par

The questions under consideration here are closely related to some recent investigations of Frobenius-Schur indicators for nonsemisimple Hopf algebras.  The 2nd Frobenius-Schur indicator $\nu_2(V)$ of an irreducible complex representation of a finite group was introduced in \cite{FS06}; the notion was then extended to semisimple Hopf algebras, quasi-Hopf algebras, certain $C^*$-fusion categories and conformal field theory (cf. \cite{LM00, MN05, FGSV99, Bantay97}).  Higher Frobenius-Schur indicators $\nu_n(V)$ for semisimple Hopf algebra have been extensively  studied in \cite{KSZ}.  In the most general context, Frobenius-Schur (FS-) indicators can be defined for each object $V$ in a \emph{pivotal} tensor category $\CC$, and they are invariants of these tensor categories \cite{NS1}.
\par

The $n$th Frobenius-Schur indicators $\nu_n(H)$ of the regular representation of a semisimple Hopf algebra $H$, defined in \cite{LM00}, in particular is an invariant of the fusion category $\C{H}$ (cf. \cite{NS1} and \cite[Thm. 2.2]{NS2}).  For this special representation it is obtained in \cite{KSZ} that
\begin{equation}\label{eq:nu_nH}
\nu_n(H)=\Tr(S\circ P_{n-1})
\end{equation}
where $P_k$ denotes the $k$th convolution power of the identity map $\id_H$ in $\End_{\k}(H)$. On elements, the map $S\circ P_{n-1}$ is given by $h\mapsto S(h_1\dots h_{n-1})$.

The importance of the FS-indicators is illustrated in their applications to semisimple Hopf algebras and \emph{spherical} fusion categories (see for examples \cite{BNRW}, \cite{DLN}, \cite{KSZ}, \cite{NS3}, \cite{NS4}, \cite{O15}   and \cite{Tuc16}). The arithmetic properties of the values of the FS-indicators have played an integral role in all these applications, and remains the main interest of FS-indicators (see for example  \cite{GM}, \cite{IMM}, \cite{MVW}, \cite{Schau16},  and \cite{Shim15}).

It would be tempting to extend the notion of FS-indicators for the study of finite tensor categories or nonsemisimple Hopf algebras.   One would expect that such a \emph{generalized} indicator for a general Hopf algebra $H$ should coincide with the existing one when $H$ is semisimple.
\par

The introduction of (what we refer to as) the KMN-indicators $\nuk_n(H)$ in \cite{KMN} is an attempt at this endeavor.  Note that the right hand side of \eqref{eq:nu_nH}, $\Tr(S\circ P_{n-1})$ is well-defined for any finite-dimensional Hopf algebra over any base field, and we denote it as $\nuk_n(H)$. It has been shown in \cite{KMN} that the scalar $\nuk_n(H)$ is an invariant of the finite tensor category $\C{H}$ for each positive integer $n$.  However, this definition of indicators for the regular representation in $H\mod$ cannot be extended to other objects in $\C{H}$.

In~\cite{shimizu15} Shimizu lays out an alternative categorial approach to generalized indicators for a nonsemisimple Hopf algebra $H$. He first constructs a {\it universal pivotalization} $(H\mod)^{\piv}$ of $H\mod$, i.e. a pivotal tensor category with a fixed monoidal functor $\Pi: (\C{H})^\piv \to \C{H}$ which is universal among all such categories. The pivotal category $(H\mod)^{\piv}$ has a \emph{regular object} $\RH$, and   the scalar $\nuk_n(H)$  can be recovered from a new version of the $n$th indicator $\nus_n(\RH^*)$. The  universal pivotalization is natural in the sense that for any monoidal functor $\FF: \C{H}\to \C{K}$, where $K$ is a Hopf algebra, there exists a unique pivotal functor $\FF^\piv :(\C{H})^\piv \to (\C{K})^\piv$ compatible with both $\Pi$ and $\FF$.

However, the invariance of $\nuk_n(H)$ does not follow immediately from this categorical framework. Instead, it would be a consequence of a proposed isomorphism $\FF^\piv(\RH) \cong \RK$ associated to any monoidal equivalence $\FF: \C{H}\to \C{K}$. While the latter condition remains open in general, we show below that the regular objects are preserved under monoidal equivalence for Hopf algebras with the Chevalley property.

\begin{theoremB}[Thm.~\ref{thm:presreg}]
Let $H$ and $K$ be Hopf algebras with the Chevalley property and $\FF:\C{H}\to \C{K}$ an equivalence of tensor categories.  Then the induced pivotal equivalence $\FF^{\piv}:(H\mod)^{\piv}\to (K\mod)^{\piv}$ on the universal pivotalizations satisfies $\FF^{\piv}(\mathbf{R}_H)\cong \mathbf{R}_K$.
\end{theoremB}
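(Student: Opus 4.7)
My plan is to use the Chevalley hypothesis to descend to the maximal semisimple quotient, where the analogous statement has a classical proof, and then to lift the comparison back to the full category by combining the radical filtration with the trace invariance from Theorem~\ref{t:2}. First I would make Shimizu's regular object concrete: an object of $(H\mod)^\piv$ is a pair $(V,\phi)$ with $V \in H\mod$ and $\phi : V \xrightarrow{\sim} V^{**}$ an $H$-linear isomorphism, and $\RH$ is the regular representation $H$ equipped with the pivotal iso induced by $S_H^2$ after identifying $H^{**}$ with $H$ acted on through $S_H^2$. The induced pivotal equivalence is given by $\FF^\piv(V,\phi) = (\FF(V), \FF(\phi))$ after the canonical iso $\FF(V^{**}) \cong \FF(V)^{**}$.

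Next I would reduce to the semisimple quotients. Since $J_H$ and $J_K$ are Hopf ideals, $\bar H := H/J_H$ and $\bar K := K/J_K$ are semisimple Hopf algebras, and $\bar H\mod$, $\bar K\mod$ are the maximal semisimple tensor subcategories of $H\mod$, $K\mod$---a categorical invariant preserved by $\FF$. Thus $\FF$ restricts to a tensor equivalence $\bar\FF : \bar H\mod \to \bar K\mod$. By Larson--Radford, $S_{\bar H}^2 = \id$ and $S_{\bar K}^2 = \id$, so the regular objects $\mathbf{R}_{\bar H}$ and $\mathbf{R}_{\bar K}$ are the regular representations with identity pivotal structure, and the desired isomorphism $\bar\FF^\piv(\mathbf{R}_{\bar H}) \cong \mathbf{R}_{\bar K}$ follows from standard rigidity of fusion categories.

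To propagate this back to the full category, note that $\FF(H) \cong K$ as $K$-modules, because tensor equivalences preserve the multiplicities $[H : P(S)] = \dim S$ of projective covers of simples in the regular representation. The remaining task is to upgrade this $K$-linear isomorphism to one in $(K\mod)^\piv$: I must show that the pivotal structure $\FF^\piv(S_H^2)$, transported through $\FF(H) \cong K$, agrees with $S_K^2$ up to conjugation by an automorphism of $K$ as a left $K$-module. For this I would appeal to Theorem~\ref{t:2}: the equalities $\Tr(S_H^n) = \Tr(S_K^n)$ for all $n$, combined with Radford's formula for $S^4$ and an induction along the radical filtration $H \supset J_H \supset J_H^2 \supset \cdots$, should force the desired agreement.

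The principal obstacle is this last lifting step. A tensor equivalence need not preserve any splitting of the radical filtration (only the associated graded), so one must argue that the pivotal structure on $\RH$ is determined, up to equivalence in the pivotal cover, by the categorical data preserved by $\FF$---namely the semisimple quotient, the graded pieces of the radical filtration, and the sequence of antipode traces. Extracting the right rigidity property from these three inputs is the crux of the argument.
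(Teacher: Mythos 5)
Your reduction to the semisimple quotient is sound and matches the paper's opening move (its Lemma~\ref{l:reduction} performs exactly this descent; note that the semisimple statement you need is \cite[Prop.~5.10]{shimizu15}, which rests on preservation of the \emph{canonical} pseudounitary pivotal structure, \cite[Cor.~6.2]{NS1} --- not mere rigidity of fusion categories). But the lifting step, which you yourself flag as ``the crux,'' is precisely the content of the theorem, and the tools you propose for it would not suffice. The trace equalities $\Tr(S_H^n)=\Tr(S_K^n)$ of Theorem~\ref{t:2} are numerical invariants; what the isomorphism $\FF^\piv(\RH)\cong\RK$ demands is the existence of an actual unit intertwining two explicit pivotal structures, and trace data cannot produce such an element. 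Indeed the paper never invokes Theorem~\ref{t:2} in its proof of Theorem~\ref{thm:presreg}: both results are independent consequences of Lemma~\ref{l:reduction}, so an argument routing the regular-object statement through trace invariance has the logical dependencies backwards.

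The missing mechanism is this. After reducing via \cite[Thm.~2.2]{NS2} to $K=H^F$ for a twist $F$, one computes (Lemma~\ref{lem:Fpiv}) that $F^\piv(\RH)=(H^F,\jj\circ l(\gamma_F)\circ S^2)$, and then observes (Proposition~\ref{prop:preseq}) that $F^\piv(\RH)\cong\mathbf{R}_{H^F}$ if and only if the equation $S^2(t)\gamma_F^{-1}=t$ has a \emph{unit} solution $t\in H$ --- this is your ``agreement up to conjugation by a module automorphism'' condition made exact, since $K$-module automorphisms of $K$ are right multiplications by units. The passage from $H/J(H)$ back to $H$ is then not an induction on the radical filtration but an averaging argument: the operator $\sigma=r(\gamma_F^{-1})\circ S^2$ generates a \emph{finite} cyclic group $\Sigma$ in $\Aut_\k(H)$ (because $\gamma_F^{(\ord(S^2))}$ is grouplike in $H^F$, by \cite[Eq.~(6)]{AEGN02}), so in characteristic zero the invariants functor $(-)^{\Sigma}$ is exact; applying it to $0\to J(H)\to H\to H/J(H)\to 0$ and using that an element of $H$ is a unit exactly when its image modulo $J(H)$ is a unit, one concludes that a unit solution exists in $H$ if and only if one exists in $H/J(H)$. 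Downstairs $\bar S^2=\id$ and $\bar\gamma_F=\bar 1$ (Lemma~\ref{l:reduction}, i.e.\ your semisimple-quotient step), so $\bar t=\bar 1$ solves the reduced equation. Without this fixed-point/Maschke mechanism --- or a genuine replacement for it --- your outline stops exactly where the proof has to begin.
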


This gives a positive solution to~\cite[Quest. 5.12]{shimizu15}.  From Theorem {II} we recover the gauge invariance result of~\cite{KMN}, in the specific case of Hopf algebras with the Chevalley property.

\begin{corollaryB}[{cf.~\cite[Thm. 2.2]{KMN}}]
Suppose $H$ and $K$ are Hopf algebras with the Chevalley property and have equivalent tensor categories of representations.  Then $\nuk_n(H)=\nuk_n(K)$.
\end{corollaryB}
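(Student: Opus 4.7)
The plan is to deduce the corollary as a quick formal consequence of Theorem II together with two general facts: (i) the $n$th Frobenius--Schur indicator $\nus_n$ of Shimizu is an invariant of pivotal tensor categories, i.e.\ it is preserved by equivalences of pivotal tensor categories; and (ii) the KMN indicator $\nuk_n(H)$ coincides with $\nus_n(\RH^*)$, as established in~\cite{shimizu15}. So the target identity $\nuk_n(H)=\nuk_n(K)$ will follow once one produces a pivotal equivalence sending $\RH^*$ to $\RK^*$, which is exactly what Theorem II supplies.

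Concretely, I would argue as follows. First, given the tensor equivalence $\FF:\C{H}\to\C{K}$, invoke the naturality of Shimizu's universal pivotalization to obtain the induced pivotal functor $\FF^\piv:(H\mod)^\piv\to (K\mod)^\piv$. Because $\FF$ is a tensor equivalence, $\FF^\piv$ is an equivalence of pivotal tensor categories; in particular it is compatible with duals up to canonical isomorphism. Now apply the chain of identities
\[
\nuk_n(H)\;=\;\nus_n(\RH^*)\;=\;\nus_n\bigl(\FF^\piv(\RH)^*\bigr)\;=\;\nus_n(\RK^*)\;=\;\nuk_n(K),
\]
where the first and last equalities are Shimizu's identification of $\nuk_n$ with the pivotal indicator of the dual regular object, the middle equality uses Theorem II together with the compatibility of $\FF^\piv$ with duals, and the invariance of $\nus_n$ under the pivotal equivalence $\FF^\piv$ is applied implicitly when rewriting $\nus_n(\RH^*)$ in terms of $\FF^\piv(\RH^*)$.

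The only real content in this proof is Theorem II, and the present corollary is purely the bookkeeping that extracts indicator invariance from regular-object invariance. Consequently, I do not anticipate any serious obstacle here; the one point that deserves a sentence of care is checking that the identification of $\nuk$ with $\nus$ of the dual regular object is itself functorial enough to be transported along $\FF^\piv$, but this is immediate from how $\RH$ and its indicator are constructed in~\cite{shimizu15} and from the pivotal invariance of Frobenius--Schur indicators established in~\cite{NS1}. Accordingly, the corollary should be stated and proved in a single short paragraph that cites Theorem II and the two ingredients above.
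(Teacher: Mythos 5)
Your proposal is correct and matches the paper's own proof essentially verbatim: the paper likewise deduces the corollary from Theorem II by noting that the pivotal equivalence $\FF^\piv$ preserves duals, so $\FF^{\piv}(\mathbf{R}_H^\ast)\cong \mathbf{R}_K^\ast$, and then applies Shimizu's invariance of $\nus_n$ under pivotal equivalences together with the identification $\nuk_n(H)=\nus_n(\RH^\ast)$ to get the same chain of equalities. No gaps; your one point of care (functoriality of the identification) is handled in the paper exactly as you suggest, by citing the relevant theorems of Shimizu.
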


The paper is organized as follows: Section 2  recalls some basic notions and results on Hopf algebras and pivotal tensor categories. In Section 3, we prove that a specific element $\g_F$ associated to a Drinfeld twist $F$ of a semisimple Hopf algebra $H$ is fixed by the antipode of $H$, using the pseudounitary structure of $\C{H}$. We proceed to prove Theorem I and Corollary I in Section 4. In Section 5, we recall the construction of the universal pivotalization $(\C{H})^\piv$, the corresponding definition of $n$th indicators for an object in $(\C{H})^\piv$ and their relations to $\nuk_n(H)$. In Section 6, we introduce finite pivotalizations of $\C{H}$ and, in particular, the exponential pivotalization which contains all the possible pivotal categories defined on  $\C{H}$. In Section 7, we answer a question of Shimizu on the preservation of regular objects for Hopf algebras with the Chevalley property.

\section{Preliminaries} \label{s:gauge}

Throughout this paper,  we assume some basic definitions on Hopf algebras and monoidal categories. We denote the antipode of a Hopf algebra $H$ by $S_H$ or, when no confusion will arise, simply by $S$.  A tensor category in this paper is a $\k$-linear abelian monoidal category with simple unit object $\1$.  A  monoidal functor between two tensor categories is a pair $(\FF, \xi)$ in which $\FF$ is a $\k$-linear functor satisfying $\FF(\1)=\1$, and $\xi_{V, W} : \FF(V) \o \FF(W) \to \FF(V \o W)$ is the coherence isomorphism. If the context is clear, we may simply write $\FF$ for the pair $(\FF, \xi)$. The readers are referred to \cite{Kassel, Mont93bk} for the details.

\subsection{Gauge equivalence, twists, and the antipode}
\label{sect:ge}

Let $H$ be a finite-dimensional  Hopf algebra over $\k$ with antipode $S$, comultiplication $\Delta$ and counit $\e$.  The category $\C{H}$ of finite-dimensional representations of $H$ is a finite tensor categories in the sense of \cite{EO}. For $V \in \C{H}$, the dual vector space $V'$ of $V$ admits the natural right $H$-action $\leftharpoonup$ given by
$$
(v^*\leftharpoonup h)(v) = v^*(h v)
$$
for $h \in H$, $v^* \in V'$ and $v \in V$. The left dual $V^*$ of $V$ is the vector space $V'$ endowed with the left $H$-action defined by
$$
h v^*= v^* \leftharpoonup S(h)
$$
for $h \in H$ and $v^* \in V'$, with the usual evaluation $\ev: V^* \o V \to \k$ and the dual basis map as the coevaluation $\coev:\k \to  V \o V^*$. The right dual of $V$ is defined similarly with $S$ replaced by $S\inv$.

Suppose $K$ is another finite-dimensional Hopf algebra over $\k$ such that $\C{K}$ and $\C{H}$ are equivalent tensor categories. It follows from  \cite[Thm. 2.2]{NS2} that there is a gauge transformation $F = \sum_i f_i \o g_i \in H \o H$ (cf. \cite{Kassel}), which is an invertible element satisfying
$$
(\e \o \id) (F) = 1 = (\id \o \e)(F),
$$
such that the map $\Delta^F: H \to H \o H,  h\mapsto F\Delta(h)F\inv$ together with the counit $\e$ and the algebra structure of $H$ form a bialgebra $H^F$ and that $K \stackrel{\sigma}{\cong} H^F$ as bialgebras. In particular, $H^F$ is a Hopf algebra with the antipode give by
\begin{equation}\label{eq: antopode}
  S_F (h) = \b_F S(h) \b_F\inv
\end{equation}
where $\b_F = \sum_i f_i S(g_i)$. Following the terminology of \cite{Kassel} (cf. \cite{KMN}), we say that $K$ and $H$ are \emph{gauge equivalent} if the categories of their finite-dimensional representations are  equivalent tensor categories. A quantity $f(H)$ obtained from a finite-dimensional Hopf algebra $H$ is called a \emph{gauge invariant} if $f(H)=f(K)$ for any Hopf algebra $K$ gauge equivalent $H$. For instance, $\Tr(S)$ and $\Tr(S^2)$ are gauge invariants of $H$.

 If $F\inv = \sum_i d_i \o e_i$, then $\b_F\inv  = \sum_i S(d_i) e_i$. For the purpose of this paper, we set $\g_F = \b_F S(\b_F\inv)$ and so, by \eqref{eq: antopode}, we have
\begin{equation}\label{eq: antipode2}
  S_F^2 (h) = \g_F S^2(h) \g_F\inv
\end{equation}
for $h \in H$.

Since the associativities of $K$ and $H$ are given by $1\o 1\o1$, the gauge transformation $F$ satisfies the condition
\begin{equation}\label{eq:2-cocycle}
  (1 \o  F)(\id \o \Delta)(F) = (F \o  1)(\Delta \o \id)(F) \,.
\end{equation}
This is a necessary and sufficient condition for $\Delta^F$ to be coassociative. A gauge transformation $F \in H \o H$ satisfying equation \eqref{eq:2-cocycle} is often called a \emph{Drinfeld twist} or simply a \emph{twist}.

Suppose $F \in H \o H$ is a twist and $K \stackrel{\s}{\cong} H^F$ as Hopf algebras. Following \cite{Kassel}, one can define an equivalence $(\FF_\s, \xi^F): \C{H} \to \C{K}$ of tensor categories.  For $V \in \C{H}$,  $\FF_\s(V)$ is the left $K$-module with the action given by $k \cdot v:=\s(k)v$ for $k \in K$ and $v \in V$. The  assignment $V \mapsto \FF_\s(V)$ defines a $\k$-linear equivalence from $\C{H}$ to $\C{K}$ with identity action on the morphisms. Together with the natural isomorphism $\xi^F : \FF_\s(V) \o \FF_\s(W) \to \FF_\s(V \o W)$ defined by the action of $F\inv$ on $V \o W$, the pair $(\FF_\s, \xi^F): \C{H} \to \C{K}$ is an equivalence of tensor categories. If $K = H^F$ for some twist $F \in H\o H$, then
$(\Id, \xi^F): \C{H} \to \C{H^F}$ is an equivalence of tensor categories since $\FF_{\id}$ is the identity functor $\Id$.

\subsection{Pivotal categories}
\label{sect:piv}

For any finite tensor category $\CC$ with the unit object $\1$, the left duality can define a functor $(-)^*: \CC\to \CC^\op$ and the double dual functor $(-)^{**} : \CC \to \CC$ is an equivalence of tensor categories. A pivotal structure of $\CC$ is an isomorphism $j: \Id \to (-)^{**}$ of monoidal functors. Associated with a pivotal structure $j$ are the notions of \emph{trace} and \emph{dimension}: For any $V \in \CC$ and $f : V \to V$, one can define $\ptr(f)$ as the scalar of the composition
$$
\ptr(f):=(\1 \xrightarrow{\coev} V \o V^* \xrightarrow{f \o V^*} V\o V^*\xrightarrow{j\ox V^*} V^{**} \o V^* \xrightarrow{\ev} \1)
$$
and $d(V) =\ptr(\id_V)$. A finite tensor category with a specified pivotal structure is called a\emph{ pivotal category}.

Suppose $\CC$ and $\DD$ are pivotal categories with the pivotal structures $j$ and $j'$ respectively, and $(\FF, \xi): \CC \to \DD$ is a monoidal functor. Then there exists a unique natural isomorphism $\tilde\xi: \FF(V^*) \to \FF(V)^*$ which is determined by either of the following commutative diagrams (cf. \cite[p67]{NS1}):
 \begin{equation} \label{eq:dual_tran}
 {\scriptsize
\begin{gathered}\xymatrix{  \FF(V^*) \o \FF(V) \ar[d]^-{\xi}  \ar[r]^-{\tilde\xi \o \FF(V)} & \FF(V)^* \o \FF(V)  \ar[d]_-{\ev} \\
 \FF(V^* \o V) \ar[r]^-{\FF(\ev)}  & \1 \,
}
\end{gathered}
\text{\normalsize or }
\begin{gathered}\xymatrix{  \FF(V) \o \FF(V^*)  \ar[r]^-{\FF(V)\o \tilde\xi} & \FF(V) \o \FF(V)^*   \\
 \FF(V \o V^*) \ar[u]_-{\xi^{-1}}   & \1\ar[l]^-{\FF(\coev)} \ar[u]_-{\coev} \,.
}
\end{gathered}
}
 \end{equation}
The monoidal functor $(\FF, \xi)$ is said to be \emph{pivotal} if it preserves the pivotal structures, which means the commutative diagram
\begin{equation}\label{eq:piv_eqv}
\begin{gathered}
  \xymatrix{
\FF(V) \ar[d]_-{j'_{\FF(V)}} \ar[r]^-{\FF(j_V)} & \FF(V^{**}) \ar[d]^-{\tilde\xi} \\
\FF(V)^{**} \ar[r]^-{{\tilde\xi}^*}& \FF(V^*)^*
}
\end{gathered}
\end{equation}
is satisfied for $V \in \CC$. It follows from \cite[Lem. 6.1]{NS1} that pivotal monoidal equivalence preserves dimensions. More precisely, if $\FF: \CC \to \DD$ is an equivalence of pivotal  categories, then $d(V)= d(\FF(V))$ for $V  \in \CC$.

 \section{Semisimple Hopf algebras and pseudounitary fusion categories}
\label{sect:pseudounitary}

In general, a finite tensor category may not have a pivotal structure. However, all the known semisimple finite tensor categories, also called \emph{fusion categories}, over $\k$, admits a pivotal structure. It remains to be an open question whether every fusion category admits a pivotal structure (cf. \cite{ENO}).  We present an equivalent definition of \emph{pseudounitary} fusion categories obtained in \cite{ENO} or more generally in \cite{DGNO} as in the following proposition.

\begin{proposition} \label{p:0} \cite{ENO} Let $\k_c$ denote the subfield of $\k$ generated by $\BQ$ and all the roots of unity in $\k$. A fusion category $\CC$ over $\k$ is called ($\phi$-)pseudounitary if there exist a pivotal structure $j^\CC$ and a field monomorphism $\phi: \k_c \to \BC$ such that $\phi(d(V))$ is real and nonnegative for all simple $V \in \CC$, where $d(V)$ is the dimension of $V$ associated with $j^\CC$. In this case, this pivotal structure $j^\CC$ is unique and $\phi(d(V))$ is identical to the Frobenius-Perron dimension of $V$.
\end{proposition}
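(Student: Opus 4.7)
The plan is to establish the two substantive assertions embedded in the statement: that $\phi(d(V))=\FPdim(V)$ on every simple object, and that the pivotal structure $j^\CC$ realizing pseudounitarity is unique. Both rest on one fundamental input, the Frobenius--Perron theorem for fusion rings, together with a standard comparison of pivotal structures.

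First I would recall that any pivotal structure $j$ on $\CC$ produces a dimension function $d:\Irr(\CC)\to\k$ which is additive under direct sums and multiplicative under tensor products, and hence extends to a ring homomorphism $d:K_0(\CC)\to\k$. Transporting through $\phi$ yields a character $\phi\circ d:K_0(\CC)\to\BC$ whose values on the simple basis are nonnegative real numbers by the pseudounitarity hypothesis. Considering the nonnegative integer matrix $M_V$ of left multiplication by a simple $V$ in the basis $\Irr(\CC)$, the Frobenius--Perron theorem guarantees a unique (up to scaling) common positive eigenvector with eigenvalues equal to the Frobenius--Perron dimensions of the simples. Any character of $K_0(\CC)$ taking nonnegative real values on all simples must therefore coincide with $\FPdim$, which gives $\phi(d(V))=\FPdim(V)$ for every simple $V$.

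For the uniqueness, suppose $j$ and $j'$ are two pivotal structures on $\CC$ both realizing the $\phi$-pseudounitary condition. Categorical dimensions in a fusion category are cyclotomic integers and thus lie in $\k_c$; since $\phi$ is injective on $\k_c$ and $\phi\circ d=\FPdim=\phi\circ d'$, the two dimension functions coincide on $\Irr(\CC)$. The ratio $\chi:=(j')\inv\circ j$ is a monoidal natural automorphism of $\Id_\CC$, and by Schur's lemma acts as a scalar $\chi_V\in\k^\times$ on each simple $V$. Unpacking the definition of the pivotal trace with respect to $j$ versus $j'$ gives $d(V)=\chi_V\cdot d'(V)$, and since $d(V)=\FPdim(V)$ is nonzero one concludes $\chi_V=1$ for every simple $V$. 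As every object of $\CC$ decomposes as a direct sum of simples, this forces $\chi=\id$ and hence $j=j'$.

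The main conceptual input is the Frobenius--Perron theorem itself: one needs to know that the multiplication matrices of simples in a fusion ring admit a common positive eigenvector which is unique up to scaling. This is the hard part, and is the crux of the ENO argument. Once granted, the remainder of the proposition, including the Schur-lemma comparison of pivotal structures via a monoidal natural automorphism of the identity, is essentially formal.
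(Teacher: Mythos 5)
The paper contains no proof of Proposition \ref{p:0} to compare against: it is quoted verbatim as a known result of \cite{ENO} (more generally \cite{DGNO}) and used only as input to Theorem \ref{t:1} and Corollary \ref{c:1}. Your reconstruction is correct and is essentially the standard argument behind the cited result: the pivotal dimension is a character of the Grothendieck ring, a character of a transitive fusion ring with nonnegative values on the basis must equal $\FPdim$ by Perron--Frobenius, and pivotal structures form a torsor over the monoidal natural automorphisms of $\Id_\CC$, whose scalars (Schur) are then forced to equal $1$ by comparing dimensions.

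Two fine points are worth recording. First, your step ``any character with nonnegative values on simples coincides with $\FPdim$'' tacitly upgrades nonnegativity to strict positivity; this is where rigidity enters: since $X\o X^*$ contains $\1$, any such character $\chi$ satisfies $\chi(X)\chi(X^*)\ge \chi(\1)=1$, so $\chi>0$ on simples (equivalently, a nonzero nonnegative eigenvector of the strictly positive matrix $\sum_i M_{X_i}$ is automatically strictly positive), after which the Perron--Frobenius uniqueness you invoke applies. Second, your uniqueness argument fixes the embedding $\phi$, and this is not a loophole but the only correct reading of the proposition (hence the prefix ``$\phi$-''): uniqueness fails if $\phi$ is allowed to vary. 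For the Ising category the two pivotal structures give $d(\sigma)=\sqrt2$ and $d(\sigma)=-\sqrt2$, and each becomes nonnegative under a suitable embedding $\k_c\to\BC$ (one fixing $\sqrt2$, one Galois-twisted by $\sqrt2\mapsto-\sqrt2$), so both structures are pseudounitary for different choices of $\phi$. Had you tried to prove $j=j'$ for witnesses with distinct embeddings, the argument would necessarily break at the point where you cancel $\phi$ to conclude $d=d'$.
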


The reference of $\phi$ becomes irrelevant when the dimensions associated with the pivotal structure $j^\CC$ of $\CC$ are nonnegative integers.  In this case, $\CC$ is simply said to be pseudounitary, and $j^\CC$ is called the \emph{canonical} pivotal structure of $\CC$. In particular, the fusion category $\C{H}$ of a finite-dimensional semisimple quasi-Hopf algebra $H$ is pseudounitary and the pivotal dimension of an $H$-module $V$ associated with the canonical pivotal structure of $\C{H}$ is simply the ordinary dimension of $V$ (cf. \cite{ENO}).
\par
The canonical pivotal structure $\jj$ on the \emph{trivial} fusion category  $\Vec$ of finite-dimensional $\k$-linear space is just the usual vector space isomorphism $V\to V^{**}$, which sends an element $v\in V$ to the evaluation function $\hat{v}: V^*\to \k$, $f \mapsto f(v)$.

 Let $H$ be a finite-dimensional semisimple Hopf algebra over $\k$. Then the antipode $S$ of $H$ satisfies $S^2=\id$ (cf. \cite{LR88}). Thus, for $V \in \C{H}$, the natural isomorphism $\jj: V \to V^{**}$ of vector space is an $H$-module map. In fact, $\jj$ provides a pivotal structure of $\C{H}$ and the associated pivotal dimension $d(V)$ of $V$, given by the composition map
 $$
 \k \xrightarrow{\mathrm{coev}} V \o V^* \xrightarrow{j \o V^*} V^{**}\o V^* \xrightarrow{\ev} \k,
 $$
 is equal to its ordinary dimension $\dim V$, which is a nonnegative integer. Therefore, $\jj$ is the canonical pivotal structure of $\C{H}$.

  By \cite[Cor. 6.2]{NS1}, the canonical pivotal structure of a pseudounitary fusion category is preserved by any monoidal equivalence of fusion categories.  For the purpose of this article, we restate this statement in the context of semisimple Hopf algebras.

\begin{corollary}\cite[Cor. 6.2]{NS1}\label{c:1}
Let $H$ and $K$ be finite-dimensional semisimple Hopf algebras over $\k$. If $(\FF, \xi): \C{H}\to\C{K}$ defines a monoidal equivalence, then $(\FF, \xi)$ preserves their canonical pivotal structures, i.e. they satisfy the commutative diagram \eqref{eq:piv_eqv}. In particular, if $K \stackrel{\sigma}{\cong} H^F$  as Hopf algebras for some twist $F \in H \o H$, then the monoidal equivalence $(\FF_\s, \xi^F): \C{H} \to \C{K}$ preserves their canonical pivotal structures.
\end{corollary}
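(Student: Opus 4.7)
The plan is to invoke the uniqueness clause of Proposition \ref{p:0}: a pseudounitary fusion category admits at most one pivotal structure whose simple-object dimensions are nonnegative real numbers. Both $\C{H}$ and $\C{K}$ are pseudounitary (being representation categories of finite-dimensional semisimple Hopf algebras), and their canonical pivotal structures are the vector-space isomorphism $\jj$, whose associated pivotal dimensions coincide with the ordinary vector-space dimensions, and hence with the Frobenius-Perron dimensions.

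Given the monoidal equivalence $(\FF,\xi)$, I would transport the pivotal structure $\jj^K$ of $\C{K}$ back to a candidate pivotal structure $j'$ on $\C{H}$ by requiring the pivotal compatibility diagram \eqref{eq:piv_eqv} to commute for the data $(\FF,\xi,j',\jj^K)$. Concretely, one sets $\FF(j'_V) := \tilde\xi^{-1}\circ (\tilde\xi)^*\circ \jj^K_{\FF(V)}$ and transports back through $\FF$ using its full faithfulness. A routine diagram chase, based on the defining relations \eqref{eq:dual_tran} for $\tilde\xi$ and the monoidal compatibility of $\xi$, shows that $j'$ is a monoidal natural isomorphism $\Id\Rightarrow(-)^{**}$, i.e.\ a pivotal structure on $\C{H}$.

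The crux is then the dimension computation. Unwinding the definition of $j'$ and applying \eqref{eq:dual_tran} shows that $d_{j'}(V)=d_{\jj^K}(\FF(V))$ for every $V\in\C{H}$. Since every monoidal equivalence between fusion categories preserves Frobenius-Perron dimensions, and since $\jj^K$ realizes these dimensions by pseudounitarity of $\C{K}$, this scalar equals $\FPdim(V)$, a nonnegative integer for each simple $V\in\C{H}$. The uniqueness clause of Proposition \ref{p:0} then forces $j'=\jj^H$, which is precisely the commutativity of diagram \eqref{eq:piv_eqv} for $(\FF,\xi)$. The ``in particular'' statement follows by applying this to the specific equivalence $(\FF_\s,\xi^F)$.

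The main obstacle is purely bookkeeping: verifying that the transported candidate $j'$ really is a monoidal natural transformation (the monoidal condition requires compatibility between $\tilde\xi$, $\xi$, and the canonical isomorphism $(V\o W)^{**}\cong V^{**}\o W^{**}$), and that dimensions transport under $\FF$ as claimed. Both verifications are standard consequences of \eqref{eq:dual_tran} and the hexagon axioms for $\xi$. Once these routine steps are handled, the conclusion $j'=\jj^H$ is immediate from pseudounitarity.
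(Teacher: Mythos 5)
Your proposal is correct, but note that the paper does not actually prove this statement: Corollary \ref{c:1} is quoted directly from \cite[Cor.~6.2]{NS1} (the surrounding text says the authors merely ``restate this statement in the context of semisimple Hopf algebras''), so the paper's proof is the citation itself. Your transport-plus-uniqueness argument is, in substance, the standard proof of that cited result, and all of its ingredients are already recalled in the paper. Concretely: pulling the canonical structure $\jj$ of $\C{K}$ back along $(\FF,\xi)$ by setting $\FF(j'_V) := \tilde\xi^{-1}\circ\tilde\xi^*\circ \jj_{\FF(V)}$ and using full faithfulness does produce a pivotal structure $j'$ on $\C{H}$ --- this is the same combination $(\tilde\xi^*)^{-1}\tilde\xi\,\FF(j_V)$ that appears in Theorem~\ref{thm:piv_form} for factorizations through the pivotal cover, so the monoidality ``bookkeeping'' you defer is genuinely standard; the dimension identity $d_{j'}(V)=d_{\jj}(\FF(V))$ is exactly \cite[Lem.~6.1]{NS1}, which the paper quotes at the end of Section~\ref{sect:piv}; and since $d_{\jj}(\FF(V))=\dim_\k\FF(V)$ is a nonnegative integer, the uniqueness clause of Proposition~\ref{p:0} forces $j'$ to coincide with the canonical pivotal structure of $\C{H}$, which is precisely the commutativity of \eqref{eq:piv_eqv}. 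One small simplification: your appeal to preservation of Frobenius--Perron dimensions is not needed, since nonnegativity of $\dim_\k\FF(V)$ already suffices to invoke the uniqueness clause. So your route is a correct, self-contained reconstruction of a result the paper treats as a black box.
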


Now, we can prove the following condition on a twist of a semisimple Hopf algebra.
\begin{theorem} \label{t:1}
  Let $H$ be a semisimple Hopf algebra over $\k$ with antipode $S$, $F=\sum_i f_i \o g_i \in H \o H$ a twist and $\b_F=\sum_i f_i S(g_i)$.
  Then $$S(\b_F) = \b_F.$$
\end{theorem}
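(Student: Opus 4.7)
The plan is to exploit the tensor equivalence $(\Id,\xi^F):\C{H}\to\C{H^F}$ introduced in Section~\ref{sect:ge}. Since $H$ and $H^F$ share the same underlying algebra, $H^F$ is semisimple together with $H$, so both $\C{H}$ and $\C{H^F}$ are pseudounitary fusion categories whose canonical pivotal structures are the natural identifications $\jj_V:V\to V^{**}$, $v\mapsto\hat v$. By Corollary~\ref{c:1} the equivalence $(\Id,\xi^F)$ preserves these canonical pivotal structures, so the square \eqref{eq:piv_eqv} commutes with $j=j'=\jj$ and $\FF=\Id$; this is the single piece of structural input the proof needs.

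The heart of the argument is to make the dual coherence isomorphism $\tilde\xi_V:\FF(V^*)\to\FF(V)^*$ attached to $\xi^F$ explicit. Writing $F\inv=\sum_i d_i\o e_i$, the coherence $\xi^F_{V,W}$ is given by the action of $F\inv$, so unwinding the defining identity \eqref{eq:dual_tran} yields
\[
\tilde\xi_V(f)(v) \;=\; f(\b_F\inv v), \qquad f\in V^*, \ v\in V,
\]
where $\b_F\inv=\sum_i S(d_i)e_i$ as recorded in Section~\ref{sect:ge}. A short verification, using $S_F=\b_F S\b_F\inv$, confirms that this formula defines an $H^F$-module map.

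The proof then concludes by chasing a vector $v\in V$ around the square \eqref{eq:piv_eqv}. The upper-right route sends $v$ to the functional $f\mapsto f(S(\b_F\inv)v)$ on $V^*$, since the $H$-action on $V^*$ is via $S$; the lower-left route sends $v$ to $f\mapsto f(\b_F\inv v)$. Equating the two and specializing to the regular representation $V=H$ at $v=1$ yields $S(\b_F\inv)=\b_F\inv$, which is equivalent to $S(\b_F)=\b_F$ because $S$ is an antialgebra automorphism. I do not foresee a conceptual obstacle; the only delicate point is the bookkeeping needed to keep straight which antipode, $S$ or $S_F$, governs the $H^F$-module structure on each dual in sight when interpreting \eqref{eq:dual_tran} and \eqref{eq:piv_eqv}.
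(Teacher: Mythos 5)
Your proposal is correct and follows essentially the same route as the paper's own proof: both invoke Corollary~\ref{c:1} for the equivalence $(\Id,\xi^F):\C{H}\to\C{H^F}$ with the canonical (vector-space) pivotal structures $\jj$, compute $\tilde\xi^F(v^*)=v^*\leftharpoonup\b_F\inv$ from \eqref{eq:dual_tran}, and chase the square \eqref{eq:piv_eqv} to get $v^*(S(\b_F\inv)v)=v^*(\b_F\inv v)$, specializing to $V=H$, $v=1$. The only cosmetic difference is that you explicitly verify $\tilde\xi^F$ is an $H^F$-module map, which the paper leaves to the general theory defining $\tilde\xi$.
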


\begin{proof}
  Let $F\inv = \sum_i d_i \o e_i$.  Then $\b\inv = \sum_i S(d_i) e_i$  (cf. Section \ref{s:gauge}), where $\b_F$ is simply abbreviated as $\b$. For  $V \in \C{H}$, we denote by $V^*$ and $V\du$ respectively the left duals of $V$ in $\C{H}$ and $\C{H^F}$. It follows from \eqref{eq:dual_tran} that the duality transformation $\tilde\xi^F: V^* \to V\du$, for $V \in \C{H}$, of the monoidal equivalence $(\Id, \xi^F): \C{H} \to \C{H^F}$, is given by
  \begin{equation} \label{eq:dual_tran2}
  \tilde\xi^F(v^*)= v^*\leftharpoonup\b\inv
  \end{equation}
   for all $v^* \in V^*$.
Since both $H$ and $H^F$ are semisimple, their canonical pivotal structures are the same as the usual natural isomorphism $\jj$ of finite-dimensional vector spaces over $\k$. Since $(\Id, \xi^F)$ preserves the canonical pivotal structures, by \eqref{eq:piv_eqv}, we have
 $$
 \begin{aligned}
  \tilde\xi^F (\jj(v))(v^*) & =  (\tilde\xi^F)^* (\jj(v))(v^*)\\ & = \jj(v)(\tilde{\xi}^F(v^*))
  =(v^*\leftharpoonup \b\inv)(v)= v^*(\b\inv v)
 \end{aligned}
 $$
 for all $v \in V$ and $v^* \in V^*$. Rewriting the first term of this equation, we find
 $$
 v^*(S(\b\inv)v)  = v^*(\b\inv v)\,.
 $$
 This implies $\b\inv = S(\b\inv)$ by taking $V=H$ and $v = 1$.
\end{proof}

\section{Hopf algebras with the Chevalley property}
A finite-dimensional Hopf algebra $H$ over $\k$ is said to have the \emph{Chevalley property} if the Jacobson radical $J(H)$ of $H$ is a Hopf ideal. In this case, $\ol H =H/J(H)$ is a semisimple Hopf algebra and the natural surjection $\pi: H\to \ol H$ is a Hopf algebra map.  Let $F \in H\o H$ be a twist of $H$. Then
$$
\ol F := (\pi \o \pi)(F) \in \ol H \o \ol H
$$
is a twist and so
$$
\pi(\b_F) = \b_{\ol F}= \ol{S}(\b_{\ol F}) = \pi(S(\b_F))
$$
by Theorem \ref{t:1}, where $\ol S$ denotes the antipode of $\ol H$. Therefore, $S(\b_F) \in \b_F +J(H)$ and this proves
\begin{lemma}\label{l:reduction}
  Let $H$ be a finite-dimensional Hopf algebra over $\k$ with the Chevalley property. For any twist $F\in H\o H$,
  $$
  S(\b_F) \in \b_F + J(H). \qedhere
  $$
\end{lemma}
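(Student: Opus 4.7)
The plan is to reduce modulo the Jacobson radical and apply Theorem~\ref{t:1} to the resulting semisimple Hopf algebra. Since $H$ has the Chevalley property, $J(H)$ is a Hopf ideal, so $\ol H := H/J(H)$ inherits the structure of a finite-dimensional Hopf algebra and the canonical projection $\pi : H \to \ol H$ is a homomorphism of Hopf algebras. Crucially, $\ol H$ is semisimple, which is precisely the hypothesis needed to apply Theorem~\ref{t:1}.

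Next, I would transport the twist $F = \sum_i f_i \o g_i$ along $\pi$. Setting $\ol F := (\pi \o \pi)(F)$, one checks directly from the definitions that $\ol F$ is a twist of $\ol H$: the counit condition and the $2$-cocycle identity \eqref{eq:2-cocycle} are preserved by the Hopf algebra map $\pi \o \pi$, and invertibility follows by pushing forward $F\inv$. Likewise, because $\pi$ intertwines the antipodes, $\pi(\b_F) = \pi\!\left(\sum_i f_i S(g_i)\right) = \sum_i \pi(f_i)\,\ol S(\pi(g_i)) = \b_{\ol F}$.

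Now I apply Theorem~\ref{t:1} to the semisimple Hopf algebra $\ol H$ together with its twist $\ol F$, which gives $\ol S(\b_{\ol F}) = \b_{\ol F}$. Combining this with the previous identifications and the fact that $\pi \circ S = \ol S \circ \pi$,
$$
\pi(S(\b_F)) \;=\; \ol S(\pi(\b_F)) \;=\; \ol S(\b_{\ol F}) \;=\; \b_{\ol F} \;=\; \pi(\b_F),
$$
so $S(\b_F) - \b_F \in \ker \pi = J(H)$, as desired.

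There is no real obstacle here; the argument is a clean reduction. The only step requiring any care is the verification that $\ol F$ is genuinely a twist of $\ol H$ and that $\b_{\ol F}$ coincides with $\pi(\b_F)$, but both facts are immediate from $\pi$ being a Hopf algebra map (hence compatible with multiplication, comultiplication, counit, and antipode). The content of the lemma therefore rests entirely on the semisimple case established in Theorem~\ref{t:1}, which was proved using the rigidity of the canonical pivotal structure on a pseudounitary fusion category.
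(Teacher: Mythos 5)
Your proposal is correct and is essentially identical to the paper's own argument: both reduce modulo $J(H)$ using the Chevalley property, note that $\ol F = (\pi\o\pi)(F)$ is a twist of the semisimple quotient $\ol H$ with $\pi(\b_F)=\b_{\ol F}$, and then apply Theorem~\ref{t:1} together with $\pi\circ S=\ol S\circ\pi$ to conclude $S(\b_F)-\b_F\in\ker\pi=J(H)$. The only difference is that you spell out the routine verification that $\ol F$ is a twist, which the paper leaves implicit.
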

We will need the following lemma.
\begin{lemma} \label{l:trace}
 Let $A$ be a finite-dimensional algebra over $\k$ and $T$ an algebra endomorphism or anti-endomorphism of $A$.
\begin{enumerate}
  \item[(i)] For any $x \in J(A)$ and $a \in A$,
  $$
  l(x) r(a) T, \quad  l(a) r(x) T
  $$
  are nilpotent operators, where $l(x)$ and $r(x)$ respectively denote the left and the right multiplication by $x$.
  \item[(ii)]  For any $a,a', b, b' \in A$ such that $a' \in a+ J(A)$ and $b' \in b + J(A)$,  we have
  $$
  \Tr(l(a)  r(b)  T) = \Tr(l(a')  r(b')   T)\,.
  $$
\end{enumerate}
\end{lemma}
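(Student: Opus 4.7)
My plan is to deduce (ii) from (i) by a bilinear expansion of the trace, so the real content is in (i), which I would prove by a Jacobson-radical filtration argument using the nilpotence of $J(A)$ in the finite-dimensional algebra $A$.

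The key auxiliary fact I would establish first is $T(J(A)) \subseteq J(A)$ for any algebra endomorphism or anti-endomorphism $T$. For an endomorphism, I would use $J(A) = \bigcap_P P$ with $P$ ranging over primitive ideals: pulling back a simple module $M$ with annihilator $P$ along $T$ gives an $A$-module $M_T$ on which $J(A)^N$ acts trivially (for $N$ with $J(A)^N = 0$), so the image of $T(J(A))$ in the simple Artinian ring $A/P$ is nilpotent and hence zero. Thus $T(J(A)) \subseteq P$ for every primitive $P$, and intersecting yields the claim. The anti-endomorphism case follows by applying this to $T : A \to A^{\op}$, using $J(A^{\op}) = J(A)$.

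With this in hand I would prove (i) by induction, showing $L^n(A) \subseteq J(A)^n$ for $L = l(x) r(a) T$ with $x \in J(A)$. The base case $L(A) \subseteq xAa \subseteq J(A)$ is immediate, and for the inductive step one has $L^{n+1}(A) = x\, T(L^n(A))\, a \subseteq x\, T(J(A)^n)\, a \subseteq x\, J(A)^n\, a \subseteq J(A)^{n+1}$, using (anti-)multiplicativity of $T$ together with the auxiliary fact, and the two-sided ideal property of $J(A)$. Finite-dimensionality forces $J(A)^N = 0$, so $L^N = 0$; the operator $l(a) r(x) T$ is handled symmetrically.

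For (ii), I would write $a' = a + u$ and $b' = b + v$ with $u, v \in J(A)$ and expand the trace bilinearly to get
\[
\Tr\bigl(l(a') r(b') T\bigr) - \Tr\bigl(l(a) r(b) T\bigr) = \Tr\bigl(l(u) r(b) T\bigr) + \Tr\bigl(l(a) r(v) T\bigr) + \Tr\bigl(l(u) r(v) T\bigr).
\]
By (i), each term on the right is the trace of a nilpotent operator and therefore vanishes. The main obstacle is the $T$-invariance of $J(A)$; once that is in place, the remaining steps are routine bookkeeping.
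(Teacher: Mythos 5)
Your derivation of (ii) from (i) matches the paper's (the paper writes the difference as $l(x)r(b')T+l(a)r(y)T$ rather than three cross terms, which is immaterial), and your induction for (i) is essentially a repackaging of the paper's direct expansion $(l(a)r(x)T)^m=l(aT(a)\cdots T^{m-1}(a))\,r(z_m)\,T^m$, where $z_m=T^{m-1}(x)\cdots T(x)\,x$. The genuine problem is your ``key auxiliary fact'': the containment $T(J(A))\subseteq J(A)$ is \emph{false} for a general algebra endomorphism of a finite-dimensional algebra, and your proof of it commits exactly the error that allows counterexamples. Take $A=\k[t]/(t^2)\times M_2(\k)$, so that $J(A)=\k t\times 0$, and let $T$ be the composite of the projection $A\to\k[t]/(t^2)$ with the unital algebra map $\k[t]/(t^2)\to A$ sending $t\mapsto (0,e_{12})$; explicitly
\[
T(u_0+u_1t,\,m)=(u_0,\;u_0I+u_1e_{12}).
\]
This is a unital algebra endomorphism of $A$, yet $T(t,0)=(0,e_{12})\notin J(A)$. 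The fallacious step in your argument is ``the image of $T(J(A))$ in the simple Artinian ring $A/P$ is nilpotent and hence zero'': that image is only a multiplicatively closed subspace, not an ideal, and simple Artinian rings contain plenty of nonzero nilpotent subalgebras. Only nilpotent \emph{ideals} of a semisimple ring must vanish; indeed, in the example above, with $P=\k[t]/(t^2)\times 0$, the image of $T(J(A))$ in $A/P\cong M_2(\k)$ is $\k e_{12}\neq 0$. Since your inductive step $L^{n+1}(A)\subseteq x\,T(J(A)^n)\,a\subseteq J(A)^{n+1}$ rests entirely on this containment, your proof of (i) collapses.

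In fairness, the paper's own proof asserts $x,T(x),\dots,T^{n-1}(x)\in J(A)$ with no more justification, so at the stated level of generality it has the same soft spot; the difference is that the paper only ever applies the lemma with $T$ a power of the antipode of a finite-dimensional Hopf algebra, which is bijective, and bijective (anti-)endomorphisms genuinely do preserve $J(A)$ (moreover, under the Chevalley hypothesis $J(H)$ is a Hopf ideal, hence $S$-stable). If you add the hypothesis that $T$ is bijective, or simply that $T(J(A))\subseteq J(A)$, your induction and the paper's expansion both become correct and essentially coincide. The lemma as literally stated (arbitrary $T$) is in fact still true, but it needs one further idea rather than a proof of the false containment: pass to $A'=A/\ker(T^N)$ for $N\gg 0$, on which $T$ induces an injective, hence bijective, endomorphism; the paper's argument then shows the images of the $z_m$ vanish in $A'$ for large $m$, i.e.\ $z_m\in\ker(T^N)$, whence $z_{m+N}=T^N(z_m)\,z_N=0$, and nilpotence of $l(a)r(x)T$ (and symmetrically of $l(x)r(a)T$) follows from the displayed expansion.
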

\begin{proof}
\noindent (i)
  Let $n$ be a positive integer such that $J(A)^n=0$. We first consider the case when $T$ is an algebra endomorphism of $A$. Then
  \begin{eqnarray*}
   (l(a)   r(x)   T)^n & = & l(a)  l(T(a))   \cdots   l(T^{n-1}(a))    r(x)  \cdots   r(T^{n-1}(x) )    T^n \\
   & = & l(a T(a)\cdots T^{n-1}(a))    r(T^{n-1}(x)  \cdots   T(x)    x)  T^n\,.
  \end{eqnarray*}
  Since $J(A)^n=0$ and $x, T(x), \dots, T^{n-1}(x) \in J(A)$,
  $$
  T^{n-1}(x) \cdots T(x) x  =0.
  $$
   Therefore,  $(l(a)   r(x)   T)^n =0$. By the same argument, one can show that
  $(l(x)   r(a)   T)^n =0$. In particular, they are nilpotent operators.

  If $T$ is an algebra anti-endomorphism of $A$, then
  $$(l(a)r(x)T)^2= l(aT(x)) r(T(a) x) T^2.
   $$
   Since $T^2$ is an algebra endomorphism of $A$ and $aT(x) \in J(A)$, we have $(l(a)r(x)T)^{2n}=0$. Similarly, $(l(x)r(a)T)^{2n}=0$.\\

\noindent (ii) Let $a'=a+x$ and $b'=b+y$ for some $x, y\in J(A)$.
$$
l(a') r(b') T = l(a) r(b) T + l(x) r(b') T + l(a)r(y) T\,.
$$
By (i), $l(x) r(b') T$ and  $l(a)r(y) T$ are nilpotent operators, and so the result follows.
\end{proof}

We can now prove that the traces of the powers of the antipode of a Hopf algebra with the Chevalley property are gauge invariants.
\begin{theorem}\label{t:2}
  Let $H$ be a Hopf algebra over $\k$ with the antipode $S$. Suppose $H$ has the Chevalley property. Then for any twist $F \in H \o H$, we have
  $$
  \Tr(S^n_F) = \Tr(S^n)
  $$
  for all integers $n$, where $S_F$ is the antipode of $H^F$. Moreover, if $K$ is another Hopf algebra over $\k$ with antipode $S'$ which is gauge equivalent to $H$, then
  $$
  \Tr(S^n)=\Tr({S'}^n)
  $$
  for all integers $n$.
\end{theorem}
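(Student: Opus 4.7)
The plan is to iterate \eqref{eq: antopode} to write $S_F^n$ as a ``conjugation'' of $S^n$, and then use the Chevalley hypothesis together with Lemmas \ref{l:reduction} and \ref{l:trace} to show that the conjugation is invisible at the level of traces. By induction on $n$, using anti-multiplicativity of $S$, one obtains an invertible $\alpha_n \in H$ satisfying
\[
S_F^n(h) = \alpha_n\, S^n(h)\, \alpha_n^{-1}, \qquad n \in \BZ,
\]
with $\alpha_0 = 1$, $\alpha_1 = \beta_F$, and the recursion $\alpha_{n+1} = \beta_F\, S(\alpha_n^{-1})$ (together with an analogous recursion in the negative direction, coming from $S_F^{-1}(h) = S^{-1}(\beta_F)\, S^{-1}(h)\, S^{-1}(\beta_F^{-1})$). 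Equivalently, $S_F^n = l(\alpha_n)\, r(\alpha_n^{-1})\, S^n$ as operators on $H$.

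Next I would reduce $\alpha_n$ modulo $J(H)$. Because $H$ has the Chevalley property, $S$ and $S^{-1}$ preserve $J(H)$, so applying $S^k$ to Lemma \ref{l:reduction} gives $S^k(\beta_F) \equiv \beta_F$ and $S^k(\beta_F^{-1}) \equiv \beta_F^{-1}$ modulo $J(H)$ for every $k \in \BZ$. Induction on $|n|$ using the recursion then yields
\[
\alpha_n \equiv \begin{cases} 1 & n \text{ even},\\ \beta_F & n \text{ odd},\end{cases} \pmod{J(H)},
\]
with the corresponding congruences for $\alpha_n^{-1}$.

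The trace calculation then splits on the parity of $n$. When $n$ is even, $S^n$ is an algebra endomorphism, so a direct application of Lemma \ref{l:trace}(ii) with $T=S^n$ and replacements $(\alpha_n, \alpha_n^{-1}) \leadsto (1,1)$ yields $\Tr(S_F^n) = \Tr(l(1)\, r(1)\, S^n) = \Tr(S^n)$. When $n$ is odd, $S^n$ is an anti-endomorphism, and I would first apply Lemma \ref{l:trace}(ii) to reduce to $\Tr(l(\beta_F)\, r(\beta_F^{-1})\, S^n)$. Using the anti-multiplicativity identity $S^n(h)\, \beta_F^{-1} = S^n(S^{-n}(\beta_F^{-1}) \cdot h)$, this operator rewrites as $l(\beta_F) \circ S^n \circ l(S^{-n}(\beta_F^{-1}))$, and cyclic invariance of the trace converts its trace into $\Tr(S^n \circ l(S^{-n}(\beta_F^{-1})\, \beta_F))$. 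Since $S^{-n}(\beta_F^{-1})\, \beta_F \equiv \beta_F^{-1}\beta_F = 1 \pmod{J(H)}$, a second application of Lemma \ref{l:trace}(ii) reduces this to $\Tr(S^n)$, finishing the odd case.

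The ``moreover'' clause follows from the first part together with \cite[Thm.~2.2]{NS2}: a gauge equivalence between $H$ and $K$ produces a Hopf algebra isomorphism $K \cong H^F$ for some twist $F \in H \otimes H$, identifying the antipode of $K$ with $S_F$. The main obstacle I anticipate is the odd case: because $S^n$ is only anti-multiplicative, Lemma \ref{l:trace}(ii) cannot be applied to $l(\beta_F)\, r(\beta_F^{-1})\, S^n$ in one step to immediately collapse it onto $S^n$. The resolution, as sketched, is to slide $r(\beta_F^{-1})$ across $S^n$ via anti-multiplicativity and then cyclically rotate the trace, converting the two-sided conjugation into a single left-multiplication that Lemma \ref{l:trace}(ii) can now handle.
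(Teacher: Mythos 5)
Your proof is correct, and its skeleton matches the paper's: your $\alpha_n$ are the paper's $u_n$, the congruences $\alpha_n\equiv 1$ (even $n$) and $\alpha_n\equiv \b_F$ (odd $n$) modulo $J(H)$ come from Lemma~\ref{l:reduction} exactly as there, and the even case is settled by one application of Lemma~\ref{l:trace}(ii) in both arguments. Where you genuinely diverge is the odd case, which is the crux. After reducing to $\Tr(l(\b_F)\,r(S^n(\b_F\inv))\,S^n)$, the paper switches to Hopf-algebraic machinery: Radford's trace formula $\Tr(T)=\l(S(\L_2)\,T(\L_1))$ of \cite[Thm.~2]{Radf94}, for a left integral $\L$ of $H$ and a right integral $\l$ of $H^*$ with $\l(\L)=1$, together with the identity $\L_1\o a\L_2=S(a)\L_1\o\L_2$, which lets it cancel $\b_F$ against $\b_F\inv$ inside the integral. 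You avoid integrals entirely: anti-multiplicativity of $S^n$ for odd $n$ rewrites $r(\b_F\inv)\circ S^n$ as $S^n\circ l(S^{-n}(\b_F\inv))$, cyclicity of the trace merges the two left multiplications into $l(S^{-n}(\b_F\inv)\b_F)$, and since $S^{-n}(\b_F\inv)\b_F\equiv 1\pmod{J(H)}$, Lemma~\ref{l:trace}(ii) finishes (pedantically, rotate once more so the operator has the literal form $l(c)\,r(1)\,S^n$ required by that lemma before replacing $c$ by $1$; this is a one-line fix, not a gap). Your route buys elementarity and self-containment, using nothing beyond the already-proved lemmas and linear algebra, while the paper's buys a short, standard integral computation at the price of invoking \cite{Radf94}; both yield the same exact equality. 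Two further points in your favor: you handle negative $n$ explicitly via the recursion coming from $S_F^{-1}(h)=S\inv(\b_F)S\inv(h)S\inv(\b_F\inv)$, whereas the paper's $u_n$ are only defined for nonnegative $n$ (the negative case there must be inferred, e.g.\ from finiteness of the orders of $S$ and $S_F$); and your citation of \cite[Thm.~2.2]{NS2} for the ``moreover'' clause is the substantive reference (it produces $K\cong H^F$, as recalled in Section~\ref{s:gauge}), where the paper instead points to Corollary~\ref{c:1}.
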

\begin{proof}
   By \eqref{eq: antopode},
  the antipode $S_F$  of $H^F$ is given by
  $$
  S_F(h) = \b_F S(h) \b_F\inv
  $$
  for $h \in H$. Recall from \eqref{eq: antipode2} that
   $$
  S^2_F(h) = \g_F S^2(h) \g_F\inv
  $$
  where $\g_F = \b_F S(\b_F\inv)$. Then, for any nonnegative integer $n$, we can write $S_F^n = l(u_n) r(u_n\inv) S^n$ where $u_0=1$ and
  $$
  u_n =  \left\{\begin{array}{ll}
\g_F S^2(\g_F) \cdots S^{n-2}(\g_F) & \mbox{ if $n$ is positive and even}, \\
\b_F S(u_{n-1}\inv) & \mbox{ if $n$ is odd}.
 \end{array}\right.
 $$
 Thus, if $n$ is an even positive integer, $u_n  \in 1+ J(H)$ by Lemma \ref{l:reduction}. It follows from Lemma \ref{l:trace} that
 $$
 \Tr(S_F^n) = \Tr(l(u_n) r(u_n\inv) S^n) = \Tr(l(1) r(1) S^n) = \Tr(S^n)\,.
 $$

 From now, we assume $n$ is odd. Then $u_n \in \b_F + J(H)$ and so we have
 \begin{align} \label{eq:step1}
   \Tr(S_F^n) &= \Tr(l(u_n) r(u_n\inv) S^n)= \Tr(l(\b_F) r(\b_F\inv) S^n )\nonumber\\
    & = \Tr(l(\b_F) r(S^n(\b_F\inv)) S^n )\,.
 \end{align}
 The last equality of the above equation follows from Lemmas \ref{l:reduction} and \ref{l:trace}(ii).

 Let $\L$ be a left integral of $H$ and $\l$ a right integral of $H^*$ such that $\l(\L)=1$.  By \cite[Thm. 2]{Radf94},
 $$
 \Tr(T) = \l(S(\L_2) T(\L_1))
 $$
 for any $\k$-linear endomorphism $T$ on $H$,  where $\Delta(\L) = \L_1 \o \L_2$ is the Sweedler notation with the summation suppressed.  Thus, by \eqref{eq:step1}, we have
  \begin{equation} \label{eq:step2}
   \Tr(S_F^n)  = \l(S(\L_2) \b_F S^n(\L_1) S^n(\b_F\inv)) =  \l(S(\L_2) \b_F S^n(\b_F\inv \L_1))  \,.
  \end{equation}
  Recall from \cite[p591]{Radf94} that
  $$
  \L_1 \o a \L_2 = S(a) \L_1 \o \L_2
  $$
  for all $a \in H$. Using this equality and \eqref{eq:step2}, we find
  \begin{align*}
    \Tr(S^n_F)& =\l(S(\L_2) \b_F S^n(\b_F\inv \L_1))  = \l(S(S\inv (\b_F\inv)\L_2) \b_F S^n(\L_1)) \\
    & = \l(S(\L_2) \b_F\inv \b_F S^n(\L_1)) =\l(S(\L_2) S^n(\L_1))  =\Tr(S^n)\,.
  \end{align*}
The second statement of the theorem then follows immediately from Corollary \ref{c:1}.
\end{proof}
\begin{corollary}\label{cor:ordpres}
  If $H$ is a finite-dimensional Hopf algebra over $\k$ with the Chevalley property, then $\ord(S)$ is a gauge invariant. In particular, $\ord(S^2)$ is a gauge invariant.
\end{corollary}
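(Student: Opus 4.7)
The plan is to reduce the statement about $\ord(S)$ to a statement about the traces $\Tr(S^n)$, which are already controlled by Theorem~\ref{t:2}. The key observation is that Radford's theorem~\cite{Radf76} guarantees $S$ has finite order, so in characteristic zero $S$ is diagonalizable with eigenvalues lying in the group of roots of unity in $\k$. Consequently, $\Tr(S^n)$ is a sum of $\dim H$ roots of unity, and by the triangle inequality
\[
\Tr(S^n)=\dim H \iff \text{every eigenvalue of } S^n \text{ equals } 1 \iff S^n=\id.
\]
This gives the purely numerical criterion
\[
\ord(S_H)=\min\{\,n\ge 1 \ :\ \Tr(S_H^n)=\dim H\,\}.
\]

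With this criterion in hand, suppose $K$ is another finite-dimensional Hopf algebra gauge equivalent to $H$. Then $\dim H=\dim K$: indeed, the Frobenius--Perron dimension $\FPdim(\C{H})=\dim H$ is a tensor invariant, so $\dim H=\FPdim(\C{H})=\FPdim(\C{K})=\dim K$. By Theorem~\ref{t:2} (whose hypothesis only requires $H$, not $K$, to have the Chevalley property) we have $\Tr(S_H^n)=\Tr(S_K^n)$ for every integer $n$. Applying the criterion to both $H$ and $K$ simultaneously,
\[
\ord(S_H)=\min\{n\ge 1:\Tr(S_H^n)=\dim H\}=\min\{n\ge 1:\Tr(S_K^n)=\dim K\}=\ord(S_K).
\]

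Finally, the order of $S^2$ is determined by the order of $S$ via the elementary identity $\ord(S^2)=\ord(S)/\gcd(\ord(S),2)$, so the invariance of $\ord(S)$ immediately yields the invariance of $\ord(S^2)$. The only genuinely nontrivial ingredient is Theorem~\ref{t:2}, which we have already in place; the remaining steps are the diagonalizability of $S$ (immediate in characteristic zero from Radford's finiteness theorem) and the gauge invariance of $\dim H$ (immediate from Frobenius--Perron dimensions). I do not expect any serious obstacle, as no further input about the Chevalley property is required beyond what Theorem~\ref{t:2} already encapsulates.
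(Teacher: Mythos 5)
Your proof is correct and follows essentially the same route as the paper: characterize $\ord(S)$ in characteristic zero as the least $n\ge 1$ with $\Tr(S^n)=\dim H$, then combine Theorem~\ref{t:2} with the gauge invariance of the dimension to match the orders. The only cosmetic differences are that the paper obtains $\dim K=\dim H$ by citing its earlier results (ultimately the fact that $K\cong H^F$ as vector spaces) rather than Frobenius--Perron dimensions, and it handles the $\ord(S^2)$ statement via the observation that an antipode of odd order must be the identity, whereas your identity $\ord(S^2)=\ord(S)/\gcd(\ord(S),2)$ accomplishes the same reduction.
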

\begin{proof}
Since $\k$ is of characteristic zero, $\Tr(S^n) = \dim H$ if and only if $S^n=\id$. In particular, $\ord(S)$ is the smallest positive integer $n$ such that $\Tr(S^n)=\dim H$. If $K$ is a Hopf algebra (over $\k$) with the antipode $S'$ and is gauge equivalent to $H$, then $\dim K = \dim H$ by Corollary \ref{c:1}. Hence, by Theorem \ref{t:2}, $\ord(S) = \ord(S')$. Note that $S$ has odd order if, and only if, $S$ is the identity. Therefore, the last statement follows.
\end{proof}

\section{Pivotalization and indicators}
\label{sect:indicators}

\subsection{KMN-indicators}

For the regular representation $H$ of a semisimple Hopf algebra $H$ over $\k$ with the antipode $S$, the formula of the $n$th Frobenius-Schur indicator $\nu_n(H)$  was obtained in \cite{KSZ} and is given by \eqref{eq:nu_nH}. Since a monoidal equivalence between the module categories of two finite-dimensional Hopf algebras preserves their regular representation \cite[Thm. 2.2]{NS2} and Frobenius-Schur indicators are invariant under monoidal equivalences (cf. \cite[Cor. 4.4]{NS1} or \cite[Prop. 3.2]{NS2}), $\nu_n(H)$ is an invariant of $\Rep(H)$ if $H$ is semisimple.

The formula \eqref{eq:nu_nH} is well-defined even for a nonsemisimple Hopf algebra $H$ without any pivotal structure in $\C{H}$. In fact, the gauge invariance of these scalars has been recently proved in \cite{KMN} which is stated as the following theorem.

\begin{theorem}[{\cite[Thm. 2.2]{KMN}}]\label{thm:KMN}
For any finite-dimensional Hopf algebra $H$ over any field $\k$, we define $\nu_n^{\mathrm{KMN}}(H)$ as in \eqref{eq:nu_nH}.  If $H$ and $K$ are gauge equivalent finite-dimensional Hopf algebras over $\k$, then we have
$$\nuk_n(H)=\nuk_n(K).$$
\end{theorem}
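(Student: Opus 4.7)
The plan is to prove gauge invariance of $\nuk_n$ by reducing, via Radford's trace formula, to a statement about integrals, and then verifying directly that the resulting scalar is unchanged upon replacing $H$ by any twist $H^F$. Since every gauge equivalence comes from a twist followed by a Hopf algebra isomorphism, and $\nuk_n$ is defined purely from the Hopf structure, this reduction suffices.

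First I would apply Radford's formula $\Tr(T) = \l(S(\L_2) T(\L_1))$, as recalled in the proof of Theorem~\ref{t:2}, to the operator $T = S \circ P_{n-1}$, where $\L$ is a left integral of $H$ and $\l \in H^*$ is a right integral normalized by $\l(\L) = 1$. Using $S(ab) = S(b)S(a)$ together with coassociativity, the pieces reassemble into the compact expression
\[
\nuk_n(H) \;=\; \l\bigl(S(P_n(\L))\bigr),
\]
where $P_n(\L) = \L_{(1)} \L_{(2)} \cdots \L_{(n)}$ is the full multiplication of the $(n-1)$-fold coproduct of $\L$. Next, because $H$ and $H^F$ share the same algebra and counit, the same element $\L$ is a left integral of $H^F$, while a right integral $\l^F \in (H^F)^*$ can be written explicitly in terms of $\l$ and $F$ from its defining property together with $\Delta^F(h) = F\Delta(h)F\inv$. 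I would then substitute $S_F(h) = \b_F S(h)\b_F\inv$ from \eqref{eq: antopode} and express the iterated twisted coproduct as $\Delta^{F,(n-1)}(\,\cdot\,) = F^{(n)}\Delta^{(n-1)}(\,\cdot\,)(F^{(n)})\inv$ for an iterated twist $F^{(n)} \in H^{\o n}$; the 2-cocycle identity~\eqref{eq:2-cocycle} is precisely what makes $F^{(n)}$ well-defined (independent of the order of iteration).

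The remaining step is to combine these substitutions in $\l^F\bigl(S_F(P^F_n(\L))\bigr)$ and use the left-integral identity $\L_1 \o a\L_2 = S(a)\L_1 \o \L_2$ of \cite[p591]{Radf94}, together with the normalizations $(\e \o \id)(F) = 1 = (\id \o \e)(F)$, to migrate the twist factors across tensor slots and cancel them in pairs. The main obstacle is exactly this combinatorial bookkeeping: in the absence of the Chevalley hypothesis we cannot pass to a semisimple quotient to eliminate Jacobson-radical contributions as in the proof of Theorem~\ref{t:2}, so the cancellation of the components of $F$, $F\inv$, $\b_F$ and $\b_F\inv$ must be witnessed explicitly, and the correct grouping of factors, governed by the 2-cocycle condition, is where the argument genuinely rests.
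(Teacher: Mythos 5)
Your setup is correct as far as it goes: Radford's trace formula does give $\nuk_n(H)=\l(S(\L_{(1)}\cdots\L_{(n)}))$, the element $\L$ stays a left integral of $H^F$ since twisting leaves the algebra and counit untouched, and the cocycle identity \eqref{eq:2-cocycle} makes the iterated twist $F^{(n)}$ unambiguous. The genuine gap is that your argument stops exactly where the theorem begins: the ``combinatorial bookkeeping'' you defer is the entire content of the statement, and there is concrete evidence that it cannot be discharged by bookkeeping alone. The right integral $\l^F$ of $(H^F)^*$ is not $\l$ and must be computed in terms of $F$ (a nontrivial lemma in its own right), and the elements $\b_F$, $\b_F\inv$ that you expect to cancel in pairs interact badly with $S$ in general: the identity $S(\b_F)=\b_F$ is known only for semisimple $H$ (Theorem \ref{t:1}), and its proof there is not combinatorial but categorical, resting on the pseudounitarity result Corollary \ref{c:1}, while for Chevalley $H$ one only gets $S(\b_F)\in\b_F+J(H)$ (Lemma \ref{l:reduction}). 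Indeed, the closely parallel cancellation problem for $\Tr(S^n)$ is solved in this paper (Theorem \ref{t:2}) only under the Chevalley hypothesis, using Lemmas \ref{l:reduction} and \ref{l:trace} to neutralize the radical terms, and the general case of that question remains open; so a purported proof for arbitrary $H$ that invokes nothing beyond the cocycle identity and integral identities should be distrusted until the cancellation is actually exhibited. Consistent with this, the paper notes that the proof of Theorem \ref{thm:KMN} in \cite{KMN} relies heavily on Corollary \ref{c:1} in addition to Hopf-algebraic computations, i.e.\ on precisely the kind of structural input your plan omits.

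You should also note that the paper itself never proves Theorem \ref{thm:KMN} in this generality --- it is quoted from \cite{KMN} --- and the special case it does reprove (the Chevalley property) is handled by a route entirely different from yours: one first shows that a monoidal equivalence $\FF$ preserves Shimizu's regular object, $\FF^\piv(\RH)\cong\RK$ (Theorem \ref{thm:presreg}), and then deduces $\nuk_n(H)=\nus_n(\RH^*)=\nus_n(\RK^*)=\nuk_n(K)$ from \cite[Thm. 5.3 \& 5.7]{shimizu15}. To complete an argument along your lines you would need either to carry out the twist cancellation explicitly --- essentially reproducing the computation of \cite{KMN}, including the formula for $\l^F$ --- or to supply a structural ingredient playing the role that Corollary \ref{c:1} (respectively Theorem \ref{thm:presreg}) plays in the two existing proofs.
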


In general, these indicators $\nuk_n(H)$ can {\it only} be defined for the regular representation of $H$. The proof Theorem \ref{thm:KMN} relies heavily on Corollary \ref{c:1} and theory of Hopf algebras. We would like to have a categorial framework for the definition of $\nuk_n(H)$ in order to extend the definitions of the indicators to other objects in $\C{H}$ and give a categorial proof of gauge invariance of these indicators.

\subsection{The universal pivotalization}

In \cite{shimizu15} the notion of universal pivotalization $\CCp$ of a finite tensor category $\CC$ is proposed in order to produce indicators for pairs consisting of an object $V$ in $\mathscr{C}$ along with a chosen isomorphism to its double dual. Under this categorical framework, $\nuk_n(H)$ is the $n$th indicator of a special (or regular) object in $(\C{H})^\piv$.  We recall some constructions and results from~\cite{shimizu15} here.
\par

For a finite tensor category $\CC$ one can construct the universal pivotalization $\Pi_\CC: \CCp\to \CC$ of $\CC$, which is referred to as the {\it pivotal cover} of $\CC$ in~\cite{shimizu15}.\footnote{We accept the term pivotal cover, but adopt the term pivotalization as it is consistent with the constructions of~\cite{EGNObook} and admits adjectives more readily.}  The category $\CCp$ is the abelian, rigid, monoidal category of pairs $(V,\phi_V)$ of an object $V$ and an isomorphism $\phi_V:V\to V^{**}$ in $\CC$.  Morphisms $(V,\phi_V)\to (W,\phi_W)$ in $\CCp$ are maps $f:V\to W$ in $\CC$ which satisfy $\phi_W f=f^{**}\phi_V$.  Note that the forgetful functor $\Pi_\CC: \CCp\to \CC$ is faithful.
\par

The category $\CCp$ will be monoidal under the obvious tensor product $(V,\phi_V)\ox(W,\phi_W):=(V\ox W,\phi_V\ox \phi_W)$ (where we suppress the natural isomorphism $(V\ox W)^{**}\cong V^{**}\ox W^{**}$), and (left) rigid under the dual $(V,\phi_V)^*=(V^*,(\phi_V^{-1})^*)$.  There is a natural pivotal structure $j:\Id_{\CCp}\to (-)^{**}$ on $\mathscr{C}^{\piv}$ which, on each object $(V,\phi_V)$, is simply given by $j_{(V,\phi_V)}:=\phi_V$.
\par

The construction $\CCp$ is universal in the sense that any monoidal functor $\FF:\DD\to \CC$ from a pivotal tensor category $\DD$ factors uniquely through $\CCp$.  By faithfulness of the forgetful functor $\Pi_\CC: \CCp\to \CC$, the factorization $\tilde{\FF}: \DD\to \CCp$, which is a monoidal functor preserving the pivotal structures, is determined uniquely by where it sends objects.  This factorization is described as  follows.

\begin{theorem}[{\cite[Thm. 4.3]{shimizu15}}] \label{thm:piv_form}
Let $j$ denote the pivotal structure on $\DD$ and $(\FF, \xi): \DD \to \CC$ a monoidal functor.  Then the factorization $\tilde{\FF}: \DD\to \CCp$ sends each object $V$ in $\DD$ to the pair $(\FF(V), (\tilde{\xi}^*)^{-1}\tilde{\xi}\FF(j_V))$, where $\tilde{\xi}$ is the duality transformation as in Section~\ref{s:gauge}.
\end{theorem}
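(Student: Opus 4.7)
The plan is to establish uniqueness of the factorization on objects (which forces the stated formula) and existence (which requires verifying that the resulting data assembles into a pivotal monoidal functor). Since $\Pi_{\CC}:\CCp\to\CC$ is faithful and $\Pi_{\CC}\tilde{\FF}=\FF$, the action of $\tilde{\FF}$ on morphisms is already determined by $\FF$, so I focus on the action on objects and on checking compatibility with the tensor and pivotal structures.

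For uniqueness, $\tilde{\FF}(V)$ must take the form $(\FF(V),\phi_V)$ for some isomorphism $\phi_V:\FF(V)\to\FF(V)^{**}$. The pivotal structure on $\CCp$ evaluated at $(\FF(V),\phi_V)$ is $\phi_V$ by construction. Applying the pivotal preservation square \eqref{eq:piv_eqv} to $\tilde{\FF}$ at $V\in\DD$, and noting that the duality transformation of $\tilde{\FF}$ lifts that of $\FF$ verbatim (morphisms in $\CCp$ are $\CC$-morphisms compatible with the chosen pivotal data, and $\tilde{\xi}$ is characterized in $\CC$ by \eqref{eq:dual_tran}), yields the identity $\tilde{\xi}^{*}\circ\phi_V=\tilde{\xi}\circ\FF(j_V)$. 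Solving forces $\phi_V=(\tilde{\xi}^{*})^{-1}\tilde{\xi}\,\FF(j_V)$, the asserted formula.

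For existence, I define $\tilde{\FF}(V):=(\FF(V),(\tilde{\xi}^{*})^{-1}\tilde{\xi}\,\FF(j_V))$ on objects and $\tilde{\FF}(f):=\FF(f)$ on morphisms. Each $\phi_V$ is a composition of isomorphisms, hence admissible. That $\FF(f)$ intertwines $\phi_V$ and $\phi_W$ for $f:V\to W$ in $\DD$ follows from naturality of $j$ on $\DD$, naturality of $\tilde{\xi}$ in its argument, and functoriality of $(-)^{*}$. The pivotal square \eqref{eq:piv_eqv} for $\tilde{\FF}$ is then automatic from the definition of $\phi_V$.

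The main obstacle is verifying monoidality: one must check that the coherence $\xi_{V,W}:\FF(V)\o\FF(W)\to\FF(V\o W)$ is a morphism in $\CCp$ from $(\FF(V)\o\FF(W),\phi_V\o\phi_W)$ (under the canonical natural isomorphism $(A\o B)^{**}\cong A^{**}\o B^{**}$) to $(\FF(V\o W),\phi_{V\o W})$; equivalently, that $\phi_{V\o W}\circ\xi_{V,W}=\xi_{V,W}^{**}\circ(\phi_V\o\phi_W)$. I would expand both sides using the definition of $\phi$ and then invoke two standard compatibilities: the monoidality of the pivotal structure $j$ on $\DD$, giving $j_{V\o W}=j_V\o j_W$ under the canonical identification of iterated duals with tensor products; and the tensor-coherence of the duality transformation $\tilde{\xi}$, which expresses $\tilde{\xi}_{V\o W}$ in terms of $\tilde{\xi}_V$, $\tilde{\xi}_W$, and $\xi$. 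A diagram chase grounded in the universal characterization \eqref{eq:dual_tran} of $\tilde{\xi}$, using $\ev$ and $\coev$, then yields the required equality. The unit axiom is routine.
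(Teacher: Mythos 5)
The paper itself offers no proof of this statement: it is quoted verbatim from Shimizu \cite[Thm.~4.3]{shimizu15}, with the universal property of $\CCp$ (existence and uniqueness of the factorization) already assumed in the surrounding text. So there is no internal argument to compare against; judged on its own merits, your proof is essentially correct, and its core is sound. The uniqueness half is the real content here and you execute it properly: since $\Pi_\CC\tilde\FF=\FF$ with $\Pi_\CC$ faithful and strict (it preserves tensor products, duals, and evaluations on the nose), the duality transformation of $\tilde\FF$ has underlying morphism exactly $\tilde\xi$, the pivotal structure of $\CCp$ at $(\FF(V),\phi_V)$ is $\phi_V$ by construction, and the square \eqref{eq:piv_eqv} then reads $\tilde\xi^*\circ\phi_V=\tilde\xi\circ\FF(j_V)$, forcing the stated formula. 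Note that, given the paper's framing (the factorization is already known to exist and to be pivotal monoidal), this uniqueness computation alone proves the theorem; your existence half is a bonus that amounts to reproving Shimizu's universal property. Within that bonus half, the morphism-compatibility check is correct and genuinely follows from naturality of $j$, naturality of $\tilde\xi$, and functoriality of $(-)^*$ (dualizing the naturality square for $\tilde\xi$ gives precisely $\FF(f^*)^*\tilde\xi_V^*=\tilde\xi_W^*\FF(f)^{**}$, which is what is needed). The one place you stop short is the monoidal coherence of $\tilde\FF$, where you state a plan ("I would expand both sides\dots") rather than carry out the diagram chase; the two ingredients you name (monoidality of $j$ and the tensor-compatibility of $\tilde\xi$, both consequences of the characterization \eqref{eq:dual_tran}) are indeed exactly what makes that chase close, so this is a routine omission rather than a gap in the idea, but in a complete write-up it should be executed or explicitly cited.
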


From the universal property for $\CCp$ one can conclude that the construction $(-)^{\piv}$ is functorial, which means a monoidal functor $\FF:\DD\to\CC$ induces a unique pivotal functor $\FF^{\piv}:\DDp\to \CCp$ which satisfies the commutative diagram
$$
\xymatrix{\DDp \ar[r]^-{\FF^{\piv}} \ar[d]_-{\Pi_\DD} & \CCp \ar[d]^-{\Pi_\CC}\\
\DD \ar[r]^-{\FF}  & \CC
}
$$
of monoidal functors.

\subsection{Indicators via $\mathscr{C}^{\piv}$}
\label{sect:nuSh}

Following \cite{NS1}, for any $V, W \in \CC$, we denote by $A_{V, W}$ and $D_{V, W}$ for the natural isomorphisms  $\Hom_\CC(\1, V \o W) \to \Hom_\CC(V^*, W)$ and $\Hom_\CC(V, W) \to \Hom_\CC(W^*, V^*)$ respectively. Thus,
$$
T_{V,W} :=  A_{W, V^{**}}\inv \circ D_{V^*, W} \circ A_{V, W}
$$
is a natural isomorphism from $\Hom_\CC(\1, V \o W) \to \Hom_\CC(\1, W \o V^{**})$.  We also define $V^{\o 0} = \1$ and $V^{\o n} = V \o V^{\o (n-1)}$ for any positive integer $n$ inductively.

Similar to the definition provided in \cite[p71]{NS1}, for any $\bV = (V, \phi_V) \in \CCp$ and positive integer $n$, one can define
the map $E_\bV^{(n)}: \Hom_\CC(\1, V^{\o n}) \to   \Hom_\CC(\1, V^{\o n})$ by
$$
E_\bV^{(n)}(f):= \Phi^{(n)}\circ (\id \o \phi_V\inv) \circ  T_{V, W}(f)
$$
where $W = V^{\o (n-1)}$ and $\Phi^{(n)} : W \o V \to V \o W$ is the unique map obtained by the associativity isomorphisms. Shimizu's version of the $n$th FS-indicator of $\bV$ is defined as
$$
\nus_n(\bV) = \Tr(E_\bV^{(n)})\,.
$$
This indicator is preserved by monoidal equivalence in the following sense:

\begin{theorem}[{\cite[Thm. 5.3]{shimizu15}}]
If $\FF : \CC \to \DD$ is an equivalence of monoidal categories, for any $\bV \in \CCp$ and positive integer $n$, we have $\nus_n(\bV) = \nus_n(\FF^\piv(\bV))$.
\end{theorem}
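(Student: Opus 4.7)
The plan is to construct a $\k$-linear bijection
$\Psi_n:\Hom_\CC(\1,V^{\o n})\to\Hom_\DD(\1,\FF(V)^{\o n})$
from the coherence isomorphism $\xi$, and to verify directly that $\Psi_n$ intertwines $E_\bV^{(n)}$ with $E_{\FF^\piv(\bV)}^{(n)}$. Since conjugate endomorphisms share the same trace, the identity $\nus_n(\bV)=\nus_n(\FF^\piv(\bV))$ will follow immediately from this intertwining.

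First I would fix notation. Iterating $\xi$ yields a well-defined (up to associators, by Mac Lane coherence) isomorphism $\xi^{(n)}:\FF(V)^{\o n}\to\FF(V^{\o n})$, and together with $\FF(\1)=\1$ this gives the bijection $\Psi_n(f):=(\xi^{(n)})^{-1}\circ\FF(f)$; it is an isomorphism because $\FF$ is fully faithful. By Theorem~\ref{thm:piv_form}, the image $\FF^\piv(\bV)$ is the pair $(\FF(V),\psi)$ with $\psi:=(\tilde\xi^*)^{-1}\tilde\xi\,\FF(\phi_V)$, so that $\psi^{-1}=\FF(\phi_V)^{-1}\circ\tilde\xi^{-1}\circ\tilde\xi^*$.

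Next, I would verify the intertwining identity $\Psi_n\circ E_\bV^{(n)}=E_{\FF^\piv(\bV)}^{(n)}\circ\Psi_n$ piece by piece, using the decomposition $E_\bV^{(n)}=\Phi^{(n)}\circ(\id_W\o\phi_V^{-1})\circ T_{V,W}$ with $W:=V^{\o(n-1)}$ and the parallel decomposition on the $\DD$-side. The associator piece $\Phi^{(n)}$ is handled purely by Mac Lane coherence. The map $T_{V,W}$, being built entirely from evaluations, coevaluations, and the dual-morphism isomorphism $D$, is intertwined up to insertions of the duality transformation $\tilde\xi:\FF(V^*)\to\FF(V)^*$: this is precisely the content of the defining commutative diagrams~\eqref{eq:dual_tran}, which let one convert $\FF(\ev),\FF(\coev)$ into $\ev_\DD,\coev_\DD$ at the cost of $\tilde\xi$ and $\tilde\xi^*$ factors. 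Finally, the pivotal piece $\id_W\o\phi_V^{-1}$ is intertwined using the explicit form of $\psi$ above: the factors $\tilde\xi^{-1}$ and $\tilde\xi^*$ appearing in $\psi^{-1}$ are exactly what is required to absorb the $\tilde\xi,\tilde\xi^*$ insertions produced at the previous stage, leaving behind the image of $\id_W\o\phi_V^{-1}$ under $\FF$ composed with $\xi$.

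The principal obstacle is purely diagrammatic bookkeeping: although no individual commutation is deep, assembling the three pieces into a single commutative square requires carefully tracking how the iterated coherence $\xi^{(n)}$, the duality transformation $\tilde\xi$, the canonical identification $V^{\o n}\cong W\o V$, and the double-dual maps fit together. Conceptually there is no new content beyond the observation that $E^{(n)}$ is built entirely from pivotal-monoidal data which, by the very construction of $\FF^\piv$ in Theorem~\ref{thm:piv_form}, is preserved; the proof is then a single large diagram chase forced by coherence together with the universal property of the pivotalization.
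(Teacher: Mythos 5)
Your proposal is correct, and it is essentially the argument this paper defers to: the theorem is quoted from \cite[Thm.~5.3]{shimizu15} without an in-paper proof, and the proof in that reference is the same transport-and-conjugate scheme you describe, using the bijection $f\mapsto (\xi^{(n)})^{-1}\circ \FF(f)$ on $\Hom(\1, V^{\o n})$, the defining diagrams \eqref{eq:dual_tran} for $\tilde\xi$, and the description of $\FF^{\piv}$ from Theorem~\ref{thm:piv_form} to show that $E^{(n)}_{\bV}$ and $E^{(n)}_{\FF^{\piv}(\bV)}$ are conjugate operators. The bookkeeping you defer (compatibility of $A_{V,W}$ and $D_{V,W}$ with $(\FF,\xi)$ up to insertions of $\tilde\xi$, including the identification $\FF(V^{**})\cong \FF(V)^{**}$ via $(\tilde\xi^*)^{-1}\tilde\xi$) is routine naturality and coherence, and hides no gap.
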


\begin{remark}
The definition of the $n$th FS-indicator $\nus_n(\bV)$ of $\bV$ is different from the definition $\nu_n(\bV)$ introduced in \cite{NS1}, in which $E_\bV^{(n)}$ is defined on the space $\Hom_{\CCp}(\1, \bV^{\o n})$ instead. It is natural to ask the question whether or how these two notions of indicators are related.
\end{remark}

In the case of a finite-dimensional Hopf algebra $\CC=H\mod$, we take $\mathbf{R}_H=(H, \vp_H)$ to be the object in $\CCp$, in  which $H$ is the left regular  $H$-module and $ \vp_H: H\to H^{**}$ is the composition $\jj\circ S^2:H\to \FF_{S^2}(H)\cong H^{**}$.  We call $\mathbf{R}_H$ the {\it regular object} in $\CCp$, and  we have the following

\begin{theorem}[{\cite[Thm. 5.7]{shimizu15}}]
Suppose $\CC=H\mod$.  Then for each integer $n$ we have $\nus_n(\mathbf{R}^*_H)=\nu_n^{\mathrm{KMN}}(H)$.
\end{theorem}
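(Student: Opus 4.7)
The plan is to unfold both sides and match them via an explicit identification of vector spaces. By~\eqref{eq:nu_nH}, $\nuk_n(H)=\Tr(S\circ P_{n-1})$ is a trace on $H$, while $\nus_n(\mathbf{R}_H^*)=\Tr(E^{(n)}_{\mathbf{R}_H^*})$ is a trace on $\Hom_H(\1,(H^*)^{\otimes n})$. Since these live a priori on different spaces, my strategy is to present the Hom-space concretely and unfold the categorical operator into a Hopf-algebraic one whose trace reduces to the KMN formula.

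First, I would identify $\Hom_H(\1,(H^*)^{\otimes n})$. Setting $W=(H^*)^{\otimes(n-1)}$, the adjunction $A_{H^*,W}$ gives $\Hom_H(H^{**},W)$; pre-composition with the pivotal isomorphism $\phi_H: H\xrightarrow{\sim} H^{**}$ (which defines $\mathbf{R}_H$ and hence $\mathbf{R}_H^*$) yields $\Hom_H(H,W)\cong W=(H^*)^{\otimes(n-1)}$. Dualizing to $(H^{\otimes(n-1)})^*$, each element of the original Hom-space is recorded as a multilinear functional on $H^{\otimes(n-1)}$.

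Next, I would unpack $E^{(n)}_{\mathbf{R}_H^*}$ under this presentation. The composition $T_{V,W}=A_{W,V^{**}}^{-1}\circ D_{V^*,W}\circ A_{V,W}$ implements the rotation $V\otimes W\to W\otimes V^{**}$, which contributes a cyclic shift together with an $S^{\pm 2}$-twist coming from the adjunctions and dualizations. The correction $(\id\otimes \phi_V^{-1})$, where $V=H^*$ and $\phi_V=(\phi_H^{-1})^*$ involves the inverse $S^{2}$-twist, cancels this twist (this is precisely why $\mathbf{R}_H$ rather than $(H,\jj)$ is the right object to dualize), leaving a single application of $S$. The map $\Phi^{(n)}$ is a pure associativity reshuffle. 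Hence $E^{(n)}_{\mathbf{R}_H^*}$ acts on functionals in $(H^{\otimes(n-1)})^*$ by pre-composition with a map $H^{\otimes(n-1)}\to H^{\otimes(n-1)}$ built from a cyclic shift, one application of $S$, and the iterated coproduct.

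Finally, I would compute the trace. Using Radford's trace formula $\Tr(T)=\lambda(S(\Lambda_2)T(\Lambda_1))$ for $T\in\End_\k(H)$ (as in the proof of Theorem~\ref{t:2}), together with the fact that the canonical pairing of $(H^{\otimes(n-1)})^*$ with $H^{\otimes(n-1)}$ collapses the iterated comultiplication of a left integral back to a single tensor factor, the trace of $E^{(n)}_{\mathbf{R}_H^*}$ reduces to $\lambda\bigl(S(\Lambda_2)\,S(P_{n-1}(\Lambda_1))\bigr)$, which is $\Tr(S\circ P_{n-1})=\nuk_n(H)$. The hardest step will be the second one: carefully tracking the duality transformations and pivotal correction $\phi_V^{-1}$ to verify that the $S^{\pm 2}$ twists combine into exactly one $S$ together with the iterated coproduct. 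The definition of $\mathbf{R}_H^*=(H^*,(\phi_H^{-1})^*)$ is crafted precisely so that this cancellation yields the clean formula $S\circ P_{n-1}$ rather than some antipode-twisted variant.
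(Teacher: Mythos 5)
First, a point of orientation: the paper does not prove this statement at all --- it is quoted, with citation, directly from Shimizu \cite[Thm.~5.7]{shimizu15}, so the only thing your attempt can be measured against is Shimizu's own argument, which does, in broad outline, proceed along the lines you propose (present the invariants concretely, compute the rotation operator there, collapse the trace). Your first step is correct as stated: $A_{H^*,W}$ followed by precomposition with the $H$-linear isomorphism $\phi_H=\jj\circ S^2$ and evaluation at $1\in H$ identifies $\Hom_H(\1,(H^*)^{\o n})$ with $W=(H^*)^{\o(n-1)}\cong(H^{\o(n-1)})^*$.

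The genuine gap is that the two steps constituting the actual proof are asserted rather than performed, and the mechanism you describe for the key step is backwards. Unwinding $T_{H^*,W}=A_{W,H^{***}}^{-1}\circ D_{H^{**},W}\circ A_{H^*,W}$ at the level of underlying vector spaces shows it is the naive cyclic rotation implemented by the canonical map $\jj$, with no antipode twist at all; the $S^{2}$ is then \emph{introduced}, not cancelled, by the pivotal correction, because $\phi_V^{-1}\circ \jj_{H^*}$ (with $\phi_V=(\phi_H^{-1})^*$) equals precomposition with $S^2$ on $H^*$. The antipodes that eventually assemble into $S\circ P_{n-1}$ come from a source your sketch never identifies: the left-dual action on $H^*$, entering through the parametrization of invariant functionals $f_g(x\o w)=g\bigl(S(x)\cdot w\bigr)$ (diagonal action), which contributes an antipode and a multiplication in each of the $n-1$ remaining factors. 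So $E^{(n)}_{\mathbf{R}_H^*}$ corresponds to precomposition by an operator on $H^{\o(n-1)}$ that is \emph{not} ``a cyclic shift, one $S$, and the iterated coproduct''; a single $S$ emerges only after a dual-basis telescoping of the trace across the $n-1$ factors, and, depending on conventions, the telescoping may land on an antipode-twisted variant (e.g.\ $S^3$ composed with the reversed convolution power $h\mapsto h_{n-1}\cdots h_1$), whose trace must then be identified with $\Tr(S\circ P_{n-1})$ of \eqref{eq:nu_nH} by a further argument. Note also that Radford's trace formula cannot be invoked where you invoke it: it computes traces of endomorphisms of $H$, not of $\Hom_H(\1,(H^*)^{\o n})$, so it becomes applicable only \emph{after} the collapse you are trying to establish; as written, your appeal to it ``together with'' an unproven collapsing fact is circular. (A minor side remark: the contrast you draw with $(H,\jj)$ is vacuous, since $\jj:H\to H^{**}$ is not $H$-linear unless $S^2=\id$, so it is not even an object of $(\C{H})^{\piv}$.) Until these computations are carried out --- and they are the entire content of Shimizu's proof --- what you have is a correct plan, not a proof.
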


The theorem provides a convincing argument to pursue this categorical framework of FS-indicator for nonsemisimple Hopf algebras. However, this framework does not yield another proof for the gauge invariance of $\nu_n^{\mathrm{KMN}}(H)$ (cf. Theorem \ref{thm:KMN}). The gauge invariance of $\nu_n^{\mathrm{KMN}}(H)$ will follow if the following question raised in~\cite{shimizu15} can be positively answered.

\begin{question}[\cite{shimizu15}]\label{quest:shimizu}
Let $H$ and $K$ be two gauge equivalent Hopf algebras, and let
 $\FF:\C{H} \to \C{K}$ be a monoidal equivalence.  Do we have $\FF^{\upiv}(\mathbf{R}_H)\cong \mathbf{R}_K$ in $(\C{K})^\upiv$?
\end{question}

If the question is affirmatively answered for gauge equivalent Hopf algebras $H$ and $K$, then we have
$\FF^\piv(\RH) \cong \RK$ in $(\C{K})^\piv$ for any monoidal equivalence $\FF: \C{H} \to \C{K}$. Thus, $$\FF^\piv(\RH^*) \cong (\FF^\piv(\RH))^*  \cong \RK^*.$$ 
It follows from \cite[Thm. 5.3]{shimizu15} that
$$
\nuk_n(H) = \nus_n(\RH^*) = \nus_n(\FF^\piv(\RH^*)) = \nus_n(\RK^*) = \nuk_n(K)\,.
$$

An affirmative answer to the question for semisimple $H$ has been provided in \cite[Prop 5.10]{shimizu15}, and we will give in Theorem~\ref{thm:presreg} a positive answer for $H$ having the Chevalley property.  As discussed above, an affirmative answer to the above question yields a categorial proof of Theorem~\ref{thm:KMN}.

\section{Finite pivotalizations for Hopf algebras}
\label{sect:Npiv}

Let $\CC = H\mod$. In this section we simply remark that the universal pivotalization $\mathscr{C}^{\piv}$, which is not a finite tensor category in general, has a finite alternative for module categories of Hopf algebras.
\par

For any $\k$-linear map $\tau:V\to V^{**}$ we let $\underline{\tau}\in\Aut_\k(V)$ denote the automorphism $\underline{\tau} := (\jj)^{-1} \circ \tau$.

\begin{definition}
For a Hopf algebra $H$ we let $H^{\piv}$ denote the smash product $H\rtimes \mathbb{Z}$, where the generator $x$ of $\mathbb{Z}$ acts on $H$ by $S^2$.  Similarly, for any positive integer $N$ with $\mathrm{ord}(S^2)|N$, we take $H^{\Npiv}=H\rtimes (\mathbb{Z}/N\mathbb{Z})$, where again the generator $x$ of $\mathbb{Z}/N\mathbb{Z}$ acts as $S^2$.
\end{definition}

The smash products $H^{\piv}$ and $H^{\Npiv}$ admit a unique Hopf structure so that the inclusions $H\to H^{\piv}$ and $H\to H^{\Npiv}$ are Hopf algebra maps and $x$ is grouplike.
\par

It has been pointed out in \cite[Rem. 4.5]{shimizu15} that $\C{H^\piv}$ is isomorphic to $(\C{H})^\piv$ as pivotal tensor categories. To realize the identification $\Theta: \C{H^\piv} \xrightarrow{\cong} \CCp$ one takes an $H^\piv$-module $V$ to the $H$-module $V$ along with the isomorphism $\phi_V:=\jj\circ l(x):V\to \FF_{S^2}(V)\cong V^{**}$. On elements, $\phi_V(v)=\jj(x\cdot v)$.  So we see that the inverse functor $\Theta\inv:\CCp\to H^\piv\mod$ takes the pair $(V,\phi_V)$ to the $H$-module $V$ along with the action of the grouplike $x\in H^\piv$ by $x\cdot v=\underline{\phi_V}(v)$.
\par

From the above description of $\CCp$ for Hopf algebras we see that $\CCp$ will not usually be a finite tensor category.
\par

Note that, for any integer $N$ as above, we have the Hopf projection $H^\upiv\to H^{N\piv}$ which is the identity on $H$ and sends $x$ (in $H^\upiv$) to $x$ (in $H^{N\piv}$).  Dually, we get a fully faithful embedding of tensor categories $\C{H^{N\piv}}\to H^\upiv\mod$.

\begin{definition}
For any positive integer $N$ which is divisible by the order of $S^2$, we let $\CCNp$ denote the full subcategory of $\CCp$ which is the image of $\C{H^{N\piv}}\subset H^\upiv\mod$ along the isomorphism $\Theta:H^\upiv\mod\to\CCp$.
\end{definition}

From this point on if we write $H^{N\piv}$ or $\CCNp$ we are assuming that $N$ is a positive integer with $\mathrm{ord}(S^2)|N$.  We see, from the descriptions of the isomorphisms $\Theta$ and $\Theta^{-1}$ given above, that $\mathscr{C}^{\Npiv}$ is the full subcategory consisting of all pairs $(V,\phi_V)$ so that the associated automorphism $\underline{\phi_V}\in\Aut_\k(V)$ has order dividing $N$.

\begin{lemma}
The category $\CCNp$ is a pivotal finite tensor subcategory in the pivotal (non-finite) tensor category $\CCp$ which contains $\mathbf{R}_H$.
\end{lemma}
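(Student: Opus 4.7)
My plan is to reduce each assertion in the lemma to a statement about the finite-dimensional Hopf algebra $H^{N\piv}$, via the isomorphism $\Theta$. First I would observe that since $H$ is finite-dimensional and $\BZ/N\BZ$ is finite, the smash product $H^{N\piv}=H\rtimes(\BZ/N\BZ)$ is itself finite-dimensional, so $\C{H^{N\piv}}$ is a finite tensor category. Transporting along the equivalence $\Theta$ then gives $\CCNp$ the structure of a finite tensor category.

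Next, I would use the Hopf projection $\pi:H^\piv\to H^{N\piv}$ to exhibit $\CCNp$ as a tensor subcategory. Pulling modules back along $\pi$ yields the fully faithful monoidal embedding $\C{H^{N\piv}}\hookrightarrow\C{H^\piv}$ mentioned just before the definition, and composing with $\Theta$ identifies $\CCNp\hookrightarrow\CCp$ as a fully faithful monoidal embedding closed under tensor products and duals. Directly from the description $\CCNp=\{(V,\phi_V):\ord(\ul{\phi_V})\mid N\}$, closure also follows by noting that $\ul{\phi_V\o\phi_W}=\ul{\phi_V}\o\ul{\phi_W}$ after the canonical identification $(V\o W)^{**}\cong V^{**}\o W^{**}$, with an analogous computation for duals.

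For the pivotal structure, since $j_{(V,\phi_V)}:=\phi_V$ is defined at every object of $\CCp$, it restricts trivially to $\CCNp$ and makes the inclusion pivotal. Finally, the regular object $\mathbf{R}_H=(H,\jj\circ S^2)$ belongs to $\CCNp$ because the associated automorphism $\ul{\vp_H}$ equals $S^2$, which has order dividing $N$ by the standing hypothesis $\ord(S^2)\mid N$; equivalently, the action of $x\in H^\piv$ on $\mathbf{R}_H$ is $S^2$ and so factors through $\pi:H^\piv\to H^{N\piv}$.

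I do not expect any serious obstacle here: every step is a routine unpacking of the identifications already set up in the section. The only small compatibility to check carefully is that the tensor product of pairs in $\CCNp$ remains in $\CCNp$, which in turn follows from the monoidal character of the canonical isomorphism $\jj$ on $\Vec$.
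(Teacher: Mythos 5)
Your proof is correct and follows essentially the same route as the paper's: both transport the finite tensor structure from $\C{H^{N\piv}}$ along $\Theta$, inherit the pivotal structure from $\CCp$, and place $\mathbf{R}_H$ in $\CCNp$ via $\ul{\vp_H}=S^2$ and the hypothesis $\ord(S^2)\mid N$. Your version merely spells out details the paper leaves implicit (finite-dimensionality of the smash product and the direct check of closure under $\o$ and duals), which is harmless.
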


\begin{proof}
Since the map $\Theta:H^\upiv\mod\to \CCp$ is a tensor equivalence, it follows that $\CCNp$, which is defined as the image of $\C{H^{N\piv}}$ in $\CCp$, is a full tensor subcategory in $\CCp$.  The category $\CCNp$ is pivotal with its pivotal structure  inherited from $\CCp$.  The fact that $\mathbf{R}_H=(H,\jj\circ S^2)$ is in $\mathscr{C}^{\Npiv}$ just follows from the fact the order of $S^2=\underline{\phi_{\mathbf{R}_H}}$ is assumed to divide $N$.
\end{proof}

\begin{remark}
There is another interesting object $\mathbf{A}_H$ introduced in~\cite[Sect. 6.1 \& Thm. 7.1]{shimizu15}.  This object is the adjoint representation $H_{\mathrm{ad}}$ of $H$ along with the isomorphism $\phi_{\mathbf{A}_H}=\jj\circ S^2$.  We will have that $\mathbf{A}_H$ is also in $\mathscr{C}^{\Npiv}$ for any $N$.
\end{remark}

Some choices for $N$, which are of particular interest, are $N=\mathrm{ord}(S^2)$ or $N=\qexp(H)$, where $\qexp(H)$ is the quasi-exponent of $H$.  Recall that the quasi-exponent $\qexp(H)$ of $H$ is defined as the unipotency index of the Drinfeld element $u$ in the Drinfeld double $D(H)$ of $H$ (see~\cite{EGqexp02}).  This number is always finite and divisible by the order of $S^2$~\cite[Prop 2.5]{EGqexp02}. More importantly, $\qexp(H)$ is a gauge invariant of $H$.
\par

When we would like to pivotalize with respect to the the quasi-exponent we take $H^{\Epiv}=H^{\qexp(H)\mathrm{piv}}$ and $\CC^{\Epiv}=\CC^{\qexp(H)\mathrm{piv}}$.  We call $\mathscr{C}^{\Epiv}$ the {\it exponential pivotalization} of $\CC=H\mod$.
\par

If $\CC$ admits any pivotal structures, one can show that the exponential pivotalization contains a copy of $(\mathscr{C},j)$ for any choice of pivotal structure $j$ on $\mathscr{C}$ as a full pivotal subcategory.  More specifically, for any choice of pivotal structure $j$ on $\mathscr{C}$ the induced map $(\mathscr{C},j)\to \mathscr{C}^{\piv}$ will necessarily have image in $\mathscr{C}^{\Epiv}$.  In this way, the indicators for $\mathscr{C}$ calculated with respect to any choice of pivotal structure can be recovered from the (Shimizu-)indicators on $\mathscr{C}^{\Epiv}$.
\par

For some Hopf algebras $H$, the integer $\qexp(H)$ is minimal so that $\mathscr{C}^{\Npiv}$ has this property.  For example, when we take the generalized Taft algebra
\[
H_{n,d}(\zeta)=k\langle g,x\rangle/(g^{nd}-1,x^d,gx-\zeta xg),
\]
where $\zeta$ is a {\it primitive} $d$th root of unity (cf. \cite{Taft71},\cite[Def 3.1]{etingofwalton16}).  We have $\mathrm{ord}(S^2)=d$ and $nd=\qexp(H_{n,d}(\zeta))$ by \cite[Thm. 4.6]{EGqexp02}.  The grouplike element $g$ provides a pivotal structure $j$ on $\C{H_{n,d}(\zeta)}$, and the resulting map into $(\C{H_{n,d}(\zeta)})^{\piv}$ has image in $(\C{H_{n,d}(\zeta)})^{\Npiv}$ if and only if $\qexp(H_{n,d}(\zeta))|N$.  This relationship can be seen as a consequence of the general fact that $\qexp(H)=\qexp(G(H))$ for any pointed Hopf algebra $H$~\cite[Thm. 4.6]{EGqexp02}.
\par

Our functoriality result for the finite pivotalizations is the following.

\begin{proposition}\label{prop:Npiv}
For any monoidal equivalence $\FF:H\mod\to K\mod$, where $H$ and $K$ are Hopf algebras, the functor $\FF^{\piv}$ restricts to an equivalence $\FF^{\Epiv}:(H\mod)^{\Epiv}\to (K\mod)^{\Epiv}$.  Furthermore, when $H$ has the Chevalley property $\FF^{\piv}$ restricts to an equivalence $\FF^{\Npiv}:(H\mod)^{\Npiv}\to (K\mod)^{\Npiv}$ for each $N$ (in particular $N=\mathrm{ord}(S^2_H)=\mathrm{ord}(S^2_K)$).
\end{proposition}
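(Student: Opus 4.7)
The strategy is Hopf-algebraic. I would reduce via the standard gauge setup of Section~\ref{sect:ge} to the case $K = H^F$ with $\FF$ the identity on objects and morphisms and coherence $\xi^F$. Applying Theorem~\ref{thm:piv_form} together with the duality transformation $\tilde\xi(v^*)=v^*\leftharpoonup\beta_F^{-1}$ of~\eqref{eq:dual_tran2}, a direct diagram chase yields
\[
\underline{\phi'_V} \;=\; \ell(\gamma_F)\circ \underline{\phi_V}
\]
for the pivotal structure on $\FF^{\piv}(V,\phi_V)=(V,\phi'_V)$, where $\gamma_F=\beta_F S(\beta_F^{-1})$ and $\ell$ denotes left multiplication on $V$.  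Iterating this identity, using the intertwining $\underline{\phi_V}\circ \ell(h) = \ell(S^2(h))\circ \underline{\phi_V}$ (a consequence of the $H$-linearity of $\phi_V$), gives
\[
(\underline{\phi'_V})^N \;=\; \ell(u_N)\circ (\underline{\phi_V})^N,\qquad u_N \;:=\; \gamma_F\cdot S^2(\gamma_F)\cdots S^{2(N-1)}(\gamma_F),
\]
which is exactly the telescoping product appearing in the proof of Theorem~\ref{t:2}. For $(V,\phi_V)\in (H\mod)^{\Npiv}$ the right side collapses to $\ell(u_N)$, so the containment $\FF^{\piv}((H\mod)^{\Npiv})\subseteq (K\mod)^{\Npiv}$ is equivalent to the assertion $u_N=1$ in $H$ (necessity being visible already on $V=H$).

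For the Chevalley case (Part~2), Theorem~\ref{thm:presreg} furnishes a unit $a\in H^{\times}$ with $\gamma_F = a^{-1}S^2(a)$, and the product then telescopes to $u_N = a^{-1}S^{2N}(a)$, which is $1$ whenever $\ord(S^2_H)\mid N$. Since the Chevalley property is tensor-categorical and $\ord(S^2_H) = \ord(S^2_K)$ by Corollary~\ref{cor:ordpres}, applying the same argument to the inverse equivalence $\FF^{-1}$ promotes the containment to the desired pivotal equivalence $\FF^{\Npiv}$.

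For the exponential case (Part~1), gauge invariance of the quasi-exponent~\cite{EGqexp02} gives $\qexp(H)=\qexp(K)=:E$, so both exponential pivotalizations are defined with the same integer, and the reduction again requires $u_E=1$. A first observation is that $u_E$ is central: from $S_F^{2E} = \mathrm{Ad}(u_E)\circ S^{2E}$, combined with $S^{2E}=\id$ (since $\ord(S^2_H)\mid E$) and $S_F^{2E}=\id$ (since $\ord(S^2_K)\mid E$), one obtains $\mathrm{Ad}(u_E)=\id$. Upgrading centrality to $u_E=1$ is the main obstacle, as Theorem~\ref{thm:presreg} is not available outside the Chevalley setting; I would pursue this by exploiting the definition of $\qexp(H)$ as the unipotency index of the Drinfeld element $u\in D(H)$ and the compatibility of the telescoping element $u_E$ with the interaction between the twist $F$ and the canonical pivotal structure on the Drinfeld double.
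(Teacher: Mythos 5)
Your reduction to the case $K=H^F$, the formula $\underline{\phi'_V}=l(\g_F)\circ\underline{\phi_V}$, and the iterated identity $(\underline{\phi'_V})^N=l(\g_F^{(N)})\circ(\underline{\phi_V})^N$ (your $u_N$ is the paper's $\g_F^{(N)}$) reproduce exactly the content of Lemma~\ref{lem:Fpiv} and Lemma~\ref{lem:preservers}, and your Chevalley argument is correct: Theorem~\ref{thm:presreg}, through Proposition~\ref{prop:preseq}, gives a unit $t$ with $\g_F=t^{-1}S^2(t)$, the product telescopes to $\g_F^{(N)}=t^{-1}S^{2N}(t)=1$ whenever $\ord(S^2)\mid N$, and Corollary~\ref{cor:ordpres} together with the categorical nature of the Chevalley property lets you run the argument on a quasi-inverse to upgrade the containment to an equivalence. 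This is the same mechanism as the paper's proof, which packages the telescoping as the implication $F^{\piv}(\RH)\cong\mathbf{R}_{H^F}\Rightarrow\g_F^{(N)}=1$ inside Lemma~\ref{lem:preservers}.

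The general (exponential) case, however, has a genuine gap. What is needed there is precisely the identity $\g_F^{(\qexp(H))}=1$, and your argument delivers only centrality of this element. Centrality cannot be upgraded to triviality by any formal manipulation: writing $N=\ord(S^2)$ and $E=\qexp(H)$, the first lemma of Section 7 shows that $\g_F^{(N)}$ is a grouplike element of $H^F$, and since $S^{2N}=\id$ one has $\g_F^{(E)}=\bigl(\g_F^{(N)}\bigr)^{E/N}$; thus your $u_E$ is a central grouplike element of $H^F$, and central grouplikes are in general nontrivial (every grouplike in the group algebra of an abelian group is central). Your suggestion to exploit the Drinfeld element of $D(H)$ points in the right direction, but it is exactly the missing nontrivial input rather than a routine verification: the paper closes this step by citing \cite[Prop.~3.2]{EGqexp02} together with the proof of \cite[Prop.~3.3]{EGqexp02}, which show that unipotency of the Drinfeld element forces $\g_F^{(\qexp(H))}=1$ for every twist $F$, after which Lemma~\ref{lem:preservers} applies verbatim. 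Until you supply this identity, or an equivalent substitute, the first statement of the proposition remains unproven.
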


The proof of the proposition is given in the appendix.

\section{Preservation of the regular object}

In this section we show that for a monoidal equivalence $\FF:H\mod\to K\mod$ of Hopf algebras $H$ and $K$ with the Chevalley property we will have $\FF^{\piv}(\mathbf{R}_H)\cong \mathbf{R}_K$.  From this we recover Theorem \ref{thm:KMN} for Hopf algebras with the Chevalley property.
\par

Let $H$ be a finite-dimensional Hopf algebra with antipode $S$, and  $F\in H\o H$ a twist of $H$.
We let $\CC=\C{H}$, $\CC_F=\C{H^F}$, and let $F=(\FF_{\id}, \xi^F)$ denote the associated equivalence from $\CC$ to  $\CC_F$, by abuse of notation.
\par

For this section we will be making copious use of the isomorphism $\jj:V\to V^{\ast\ast}$, and adopt the shorthand $\hat{v}=\jj(v)\in V^{\ast\ast}$ for $v\in V$.  Recall that $\hat{v}$ is just the evaluation map $V^\ast\to \k$, $\eta\mapsto \eta(v)$.

\subsection{Preservation of regular objects}

Recall that the antipode $S_F$ of $H^F$ is given by $S_F(h) = \b_F S(h)\b_F\inv$ and that $\g_F=\beta_FS(\beta_F)^{-1}$. For any positive integer $k$, define
$$
\g_F^{(k)} = \g_F S^2(\g_F)\cdots  S^{2k-2}(\g_F)\,.
$$
Then we have $S^{2k}_F(h) = \g_F^{(k)} S^{2k}(h) (\g_F^{(k)})\inv$ for all positive integers $k$ and $h\in H$.  The following lemma is well-known and it follows immediately from \cite[Eq.(6)]{AEGN02}.

\begin{lemma}
The element $\g_F^{(\mathrm{ord}(S^2))}$ is a grouplike element in $H^F$.
\end{lemma}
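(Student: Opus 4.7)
The plan is to verify directly that $\g_F^{(n)}$ satisfies the defining properties of a grouplike element in $H^F$, namely $\Delta^F(\g_F^{(n)}) = \g_F^{(n)} \o \g_F^{(n)}$ and $\e(\g_F^{(n)}) = 1$, where $n = \ord(S^2)$. Since $H^F$ and $H$ share the same counit, the counit condition is immediate once one knows $\e(\g_F) = 1$: the twist axioms force $\e(\b_F) = \sum_i \e(f_i)\e(S(g_i)) = \sum_i \e(f_i)\e(g_i) = 1$, so $\e(\b_F^{-1}) = 1$, whence $\e(\g_F) = \e(\b_F)\e(S(\b_F^{-1})) = 1$. Since $\e \circ S^{2i} = \e$, multiplicativity then gives $\e(\g_F^{(n)}) = 1$.

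The substantive content is the comultiplication identity. Using $\Delta^F(h) = F\Delta(h)F^{-1}$, the grouplike condition in $H^F$ is equivalent to the identity
\[
\Delta(\g_F^{(n)}) = F^{-1}\bigl(\g_F^{(n)} \o \g_F^{(n)}\bigr)F
\]
in $H$. I would invoke the formula of \cite[Eq.~(6)]{AEGN02}, which expresses $\Delta(\g_F^{(k)})$ for arbitrary $k \geq 1$ as $F^{-1}(\g_F^{(k)} \o \g_F^{(k)})F$ corrected by an ``obstruction'' factor involving $(S^{2k}\o S^{2k})(F)$ and its inverse. This obstruction measures precisely the failure of $\g_F^{(k)}$ to be grouplike in $H^F$; when $k = n = \ord(S^2)$, the map $S^{2k}\o S^{2k}$ is the identity on $H\o H$, the obstruction collapses, and the identity above drops out immediately.

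The main obstacle is simply knowing (or re-deriving) the precise shape of the cited AEGN02 identity. If one did not wish to appeal to it as a black box, an alternative would be to establish the formula from scratch: start from the $2$-cocycle condition \eqref{eq:2-cocycle} to compute $\Delta^F(\b_F)$ and $\Delta^F(S(\b_F^{-1}))$ and assemble $\Delta^F(\g_F)$, then iterate, using that $S^{2}$ intertwines with $\Delta^F$ up to conjugation by $F$ and $(S^2\o S^2)(F)$. The telescoping product $\prod_{i=0}^{n-1}S^{2i}(\g_F)$ is then seen to accumulate exactly the $(S^{2k}\o S^{2k})(F)$-corrections which cancel precisely when $k = n$.
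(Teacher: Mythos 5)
Your proposal is correct and follows essentially the same route as the paper: both invoke \cite[Eq.~(6)]{AEGN02} for $\Delta(\g_F)$, iterate it to the identity $\Delta(\g_F^{(k)})=F\inv(\g_F^{(k)}\o\g_F^{(k)})(S^{2k}\o S^{2k})(F)$, and observe that the correction factor $(S^{2k}\o S^{2k})(F)$ collapses to $F$ when $k=\ord(S^2)$, which is precisely the grouplike condition under $\Delta^F(h)=F\Delta(h)F\inv$. Your separate counit verification is harmless but not needed, since $\e(\g_F^{(n)})=1$ follows automatically from the comultiplication identity applied to the invertible element $\g_F^{(n)}$.
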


\begin{proof}
Take $N=\mathrm{ord}(S^2)$.  We have from~\cite[Eq.(6)]{AEGN02} that
\[
\Delta(\g_F)=F\inv(\g_F\ox \g_F)(S^2\ox S^2)(F)
\]
(see also~\cite{majid95}).  Hence
\[
\Delta(\g_F^{(n)})=F\inv(\g_F^{(n)}\ox \g_F^{(n)})(S^{2n}\ox S^{2n})(F)
\]
for each $n$ and therefore
\[
\Delta_F(\g_F^{(N)})=F\Delta(\g_F^{(N)})F\inv=\g_F^{(N)}\ox \g_F^{(N)}.\qedhere
\]
\end{proof}

We have the following concrete description of the (universal) pivotalization of an equivalence $F:\mathscr{C}\to \mathscr{C}_F$ induced by a twist $F$ on $H$.

\begin{lemma}\label{lem:Fpiv}
The functor $F^{\piv}:\CCp\to\CCp_F$ sends an object $(V,\phi_V)$ in $\CCp$ to the pair consisting of the object $V$ along with the isomorphism
\[
V\to V^{**},\ \ v\mapsto \jj(\g_F\underline{\phi_V}(v)).
\]
In particular, $F^\piv(\RH)=(H^F, \jj\circ l(\g_F)\circ S^2)$.
\end{lemma}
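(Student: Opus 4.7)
The plan is to apply Theorem~\ref{thm:piv_form} directly to the monoidal composite $F\circ \Pi_{\CC}:\CCp\to\CC_F$, viewing $\CCp$ as pivotal via $j_{(V,\phi_V)}=\phi_V$. By the universal property of the pivotal cover, the resulting factorization coincides with $F^\piv:\CCp\to\CCp_F$, and the theorem produces the closed-form expression
\[
F^\piv((V,\phi_V))=\bigl(V,\ \psi_V\bigr),\qquad \psi_V:=(\tilde{\xi}_V^\vee)^{-1}\circ \tilde{\xi}_{V^*}\circ\phi_V,
\]
where $\tilde\xi$ is the duality transformation for $F=(\FF_{\id},\xi^F)$ and $(-)^\vee$ denotes the dual in $\CC_F$. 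It then suffices to evaluate this composition explicitly and match the result with $v\mapsto \jj(\g_F\underline{\phi_V}(v))$.

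For the core computation I would reuse the identity $\tilde\xi_V^F(v^*)=v^*\leftharpoonup \b_F^{-1}$ from~\eqref{eq:dual_tran2}; its derivation from~\eqref{eq:dual_tran} makes no use of semisimplicity, so it applies verbatim. Setting $w:=\underline{\phi_V}(v)$ so that $\phi_V(v)=\hat{w}$, unwinding the right $H$-action used in~\eqref{eq:dual_tran2} at the object $V^*$ yields
\[
\tilde\xi_{V^*}(\hat w)(v^*)=v^*(S(\b_F^{-1})w)\quad\text{for all } v^*\in V^*.
\]
Since $\tilde\xi_V^\vee$ coincides on underlying vector spaces with the transpose of $\tilde\xi_V$, inverting it reduces to finding $u\in V$ with $\hat{u}\circ \tilde\xi_V=\tilde\xi_{V^*}(\hat w)$, i.e.\ $v^*(\b_F^{-1}u)=v^*(S(\b_F^{-1})w)$ for every $v^*$. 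This forces $u=\b_F S(\b_F^{-1})w=\g_F w$, so $\psi_V(v)=\jj(\g_F\underline{\phi_V}(v))$, as asserted. The statement about $\RH=(H,\jj\circ S^2)$ follows at once because $\underline{\phi_H}=S^2$, and the underlying module of the first coordinate is $H$ viewed as an $H^F$-module, namely $H^F$.

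The principal obstacle is essentially bookkeeping: the spaces $V^*,V^\vee,V^{**}$, and $V^{\vee\vee}$ share underlying vector spaces but carry distinct module structures mediated by $S$ versus $S_F$, so one must consistently track which action is in play at each stage of the composition. As a reassuring sanity check, I would verify directly that the proposed $\psi_V$ is actually an $H^F$-module map $V\to V^{\vee\vee}$; this boils down to the identity $\g_F S^2(h)=S_F^2(h)\g_F$, which is an immediate rearrangement of~\eqref{eq: antipode2}.
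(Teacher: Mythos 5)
Your proposal is correct and takes essentially the same approach as the paper's proof: both identify $F^{\piv}$ with the factorization of $F\circ\Pi_{\CC}$ supplied by Theorem~\ref{thm:piv_form}, reuse the formula \eqref{eq:dual_tran2} for the duality transformation (which, as you note, is derived without semisimplicity), and perform the same computation showing $(\tilde\xi^{*})^{-1}\tilde\xi\,\phi_V = \jj\circ l(\g_F)\circ\underline{\phi_V}$. The only cosmetic difference is that you solve $\hat{u}\circ\tilde\xi_V=\tilde\xi_{V^*}(\hat{w})$ for $u$, whereas the paper evaluates the composite $(\tilde\xi^{*})^{-1}\tilde\xi(\hat{v})$ directly against functionals $f$; this is the same calculation.
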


\begin{proof}
Take $\beta=\beta_F$, $\g=\g_F$ and $\xi=\xi^F$.  Recall that $F(V^\ast)=F(V)^\ast=V^\ast$ as vector spaces for each $V$ in $\mathscr{C}$.  It follows from \eqref{eq:dual_tran2} that, for any object $V$ in $\mathscr{C}$,  $\tilde\xi: F(V^*) \to F(V)^{\ast}$ is given by
\[
\tilde{\xi}(f)=f\leftharpoonup \b\inv\ \ \text{for  }f\in V^*.
\]
 This implies
 $$
 \tilde \xi (\hat v)(f)  = (\hat v \leftharpoonup \b\inv)(f) = \hat v(\b\inv \cdot f)=f(S(\b\inv) v)=\jj(S(\b\inv) v) (f)
 $$
 for $\hat v \in F(V^{**})$ and $f \in F(V^*)$. Thus,
 $$
 \begin{aligned}
   ({\tilde\xi}^{\ast})\inv \tilde \xi (\hat v)(f) & = ({\tilde\xi}^{\ast})\inv \jj (S(\b\inv) v)(f)=\jj (S(\b\inv) v)(\tilde \xi\inv  (f)) \\
    &= \jj (S(\b\inv) v)(f \leftharpoonup \b) = f(\b S(\b\inv) v) = f(\g v) = \jj(\g v)(f)
 \end{aligned}
 $$
 for $\hat v \in F(V^{**})$ and $f \in F(V)^{\ast}$. By Theorem \ref{thm:piv_form}, $F^\piv(V, \phi_V) = (V,  (\tilde{\xi}^{\ast})\inv \tilde\xi \phi_V)$ and
 $$
 ({\tilde\xi}^{\ast})\inv \tilde\xi \phi_V(v) = ({\tilde\xi}^{\ast})\inv \tilde\xi \jj\underline{\phi_V}(v) = \jj(\g\underline{\phi_V}(v))
 $$
 for $v \in V$. The last statement follows immediately from the definition of $\RH = (H, \jj\circ S^2)$. This completes the proof.
\end{proof}

In the following proposition we let $\ol{S}^2$ denote the automorphism of $H/J(H)$ induced by $S^2$.

\begin{proposition}\label{prop:preseq}
Let $F\in H\ox H$ be a twist. The following statements are equivalent.
 \begin{enumerate}
   \item[(i)] $F^\upiv(\mathbf{R}_H)\cong \mathbf{R}_{H^F}$ in $\CC_F^{\upiv}$;
   \item[(ii)]  there is a unit $t$ in $H$ which satisfies the equation
\begin{equation}\label{eq:fundeq0}
S^2(t)\g_F^{-1}-t=0;
\end{equation}
   \item[(iii)] there is a unit $\ol t$ in $H/J(H)$ which satisfies the equation
\begin{equation}\label{eq:fundeq}
\bar{S}^2(\ol t)\ol \g_F^{-1}-\ol t=0\,.
\end{equation}
 \end{enumerate}
\end{proposition}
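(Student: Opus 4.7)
The plan is to verify (i) $\Leftrightarrow$ (ii) by a direct computation using the description of $F^\piv(\RH)$ from the previous lemma, to derive (ii) $\Rightarrow$ (iii) by projection modulo $J(H)$, and to establish the nontrivial implication (iii) $\Rightarrow$ (ii) by averaging a suitable $\k$-linear endomorphism of $H$.

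For (i) $\Leftrightarrow$ (ii), any morphism $f\colon F^\piv(\RH)\to \mathbf{R}_{H^F}$ in $\CC_F^\piv$ is in particular a map of left $H^F$-modules $H^F\to H^F$, so $f=r(t)$ for a unique $t\in H$, and $f$ is an isomorphism if and only if $t$ is a unit. Using the descriptions $F^\piv(\RH)=(H^F,\jj\circ l(\g_F)\circ S^2)$ and $\mathbf{R}_{H^F}=(H^F,\jj\circ S_F^2)$, together with the identity $f^{**}\circ\jj=\jj\circ f$ valid for any $\k$-linear endomorphism of a finite-dimensional vector space, the pivotal compatibility $\phi_W\circ f=f^{**}\circ \phi_V$ reduces to the identity
\[
\g_F\, S^2(h)\, S^2(t)\, \g_F^{-1}=\g_F\, S^2(h)\, t\quad \text{for all } h\in H,
\]
which on setting $h=1$ and cancelling $\g_F$ is equivalent to $S^2(t)\g_F^{-1}=t$. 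The implication (ii) $\Rightarrow$ (iii) follows by applying the projection $\pi\colon H\to H/J(H)$, since units project to units.

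For (iii) $\Rightarrow$ (ii), introduce the $\k$-linear endomorphism $T\colon H\to H$ defined by $T(x)=S^2(x)\g_F^{-1}$, so that (ii) is equivalent to $T$ having a unit in its fixed-point set. A short induction gives $T^n(x)=S^{2n}(x)(\g_F^{(n)})^{-1}$; in particular, with $N=\ord(S^2)$ one has $T^N=r\bigl((\g_F^{(N)})^{-1}\bigr)$. By the previous lemma, $\g_F^{(N)}$ is grouplike in $H^F$, hence lies in the finite group $G(H^F)$ and has finite multiplicative order $L$, so $T^{NL}=\id$ and $T$ has finite order $P$ as a $\k$-linear operator. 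Form the averaging idempotent
\[
e=\frac{1}{P}\sum_{k=0}^{P-1} T^k\colon H\to H,
\]
which projects $H$ onto $\ker(T-\id)$ and satisfies $T\circ e=e$. Since $T$ stabilizes $J(H)$, the reduction $\bar e$ modulo $J(H)$ is the analogous averaging operator for the induced $\bar T$ on $H/J(H)$.

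Given $\bar t$ as in (iii), lift it to a unit $t_0\in H$ (units always lift along the Jacobson radical projection) and set $t:=e(t_0)$. Then $T(t)=(T\circ e)(t_0)=e(t_0)=t$, so $t$ satisfies the equation in (ii); reducing modulo $J(H)$ gives $\pi(t)=\bar e(\bar t_0)=\bar e(\bar t)=\bar t$ since $\bar t$ is $\bar T$-fixed. Hence $t$ is a lift of the unit $\bar t$ and is therefore itself a unit in $H$. The main technical point of the argument is the finite-order property of $T$, for which the grouplike character of $\g_F^{(N)}$ in $H^F$ is essential; once this is in hand the standard averaging trick delivers the required lift in a single stroke.
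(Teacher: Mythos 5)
Your proof is correct and takes essentially the same route as the paper: the same identification of morphisms $F^{\piv}(\mathbf{R}_H)\to \mathbf{R}_{H^F}$ with right multiplications $r(t)$, reducing (i) to the fixed-point equation $S^2(t)\g_F^{-1}=t$, and the same key ingredients for (ii)$\Leftrightarrow$(iii), namely that $\g_F^{(N)}$ is grouplike so the operator $T=r(\g_F^{-1})\circ S^2$ has finite order, combined with characteristic-zero averaging and the fact that units lift along $H\to H/J(H)$. Your explicit Reynolds idempotent $e=\frac{1}{P}\sum_{k=0}^{P-1}T^k$ is just an unwound form of the paper's appeal to exactness of the $\Sigma$-invariants functor (for the finite cyclic group $\Sigma=\langle T\rangle$) applied to $0\to J(H)\to H\to H/J(H)\to 0$.
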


\begin{proof}
We take $N=\ord(S^2)$. By Lemma \ref{lem:Fpiv}, $F^{\piv}(\RH)=(H^F, \jj \circ l(\g_F)\circ S^2)$.  An isomorphism $F^{\piv}(\RH)\cong \mathbf{R}_{H^F}$ is determined by a $H^F$-module automorphism of $H^F$, which is necessarily given by right multiplication by a unit $t\in H^F$, producing a diagram
\[
\xymatrix{
H^F\ar[d]_{r(t)}\ar[rr]^-{l(\g_F)S^2} && H^F \ar[d]^-{r(t)} \ar[rr]^-{\jj} && (H^F)^{**} \ar[d]^-{r(t)^{**}}\\
{H^F} \ar[rr]_{S_F^2} && H^F \ar[rr]^{\jj} && (H^F)^{**}.}
\]

Equivalently, we are looking for a unit $t$ such that
\[
\g_FS^2(h)t = S_F^2(ht) = \g_FS^2(h)S^2(t)\g_F^{-1}
\]
for all $h \in H$. This equation is equivalent to
\begin{equation}
 S^2(t)\g_F^{-1}=t.
\end{equation}
\par

Let $\sigma$ denote the $\k$-linear automorphism $r(\g_F^{-1})\circ S^2=r(\g_F)^{-1}\circ S^2$ of $H^F$, and $\Sigma$ be the subgroup generated by $\sigma$ in $\Aut_\k(H^F)$.  Then we have $\sigma^N=r(\g_F^{(N)})^{-1}\circ S^{2N}=r(\g_F^{(N)})^{-1}$.  Since $\g_F^{(N)}$ is grouplike in $H^F$, it has a finite order.  Therefore $\sigma^N$ has finite order, as does $\sigma$, and $\Sigma$ is a finite cyclic group.
\par

Since $J(H)$ is a $\s$-invariant,
the exact sequence
$$0\to J(H)\to H\to H/J(H)\to 0$$
is in $\Rep(\Sigma)$.  Applying the exact functor $(-)^{\Sigma}$,  we get another exact sequence
\begin{equation}\label{eq:761}
0\to J(H)^{\Sigma}\to H^{\Sigma}\to (H/J(H))^{\Sigma}\to 0.
\end{equation}
\par

Recall that an element in $H$ is a unit if, and only if, its image in $H/J(H)$ is a unit.  So from the exact sequence~\eqref{eq:761}, we conclude that there is a unit in $(H/J(H))^{\Sigma}$ if and only if there is a unit in $H^{\Sigma}$.  Rather, there exists a unit $\bar{t}$ solving the equation $\sigma\cdot X-X=0$ in $H/J(H)$ if, and only if, there exists a unit $t$ solving the equation in $H$.  Since $\sigma\cdot \bar{t}=\bar{S}^2(\bar{t})\ol{\g}_F^{-1}$ and $\sigma\cdot t=S^2(t)\g_F^{-1}$,  the equation $\bar{S}^2(X)\ol{\g}_F^{-1}-X=0$ has a unit solution in $\ol{H}$ if, and only if, the equation $S^2(X)\g_F^{-1}-X=0$ has a unit solution in $H$.
\end{proof}

As an immediate consequence of this proposition, we can prove preservation of regular object for Hopf algebras with the Chevalley property.

\begin{theorem}\label{thm:presreg}
Suppose $H$ and $K$ are gauge equivalent finite-dimensional Hopf algebras with the Chevalley property, and  $\FF: \C{H} \to \C{K}$ is a monoidal equivalence. Then we have $\FF^\piv(\RH) \cong \RK$ in $(\C{K})^\piv$.
\end{theorem}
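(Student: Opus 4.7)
The plan is to combine the equivalent reformulation of the conclusion given by Proposition~\ref{prop:preseq} with Theorem~\ref{t:1}, applied to the semisimple Chevalley quotient $\ol H := H/J(H)$.

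First, I would reduce to the case where $K = H^F$ and $\FF$ is the twist equivalence $(\Id,\xi^F): \C{H} \to \C{H^F}$.  By the standard correspondence between tensor equivalences $\C{H}\to\C{K}$ and pairs $(F,\sigma)$ consisting of a twist $F \in H \o H$ and a Hopf isomorphism $\sigma: K \xrightarrow{\cong} H^F$, reviewed in Section~\ref{sect:ge} (see \cite[Thm.~2.2]{NS2}), any such $\FF$ is isomorphic as a monoidal functor to the composition $\FF_\sigma \circ (\Id,\xi^F)$.  The functor $\FF_\sigma$ is induced by a Hopf isomorphism, so one checks directly from the definitions of $\RH$ and $\FF_\sigma$ that its pivotalization carries $\RH_{H^F}$ to $\RK$: it sends the left regular $H^F$-module to the left regular $K$-module, and $\sigma$ intertwines $S^2_{H^F}$ with $S^2_K$.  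Hence the theorem reduces to showing $F^\piv(\RH) \cong \RH_{H^F}$ in $\CCp_F$.

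For this reduced statement I would apply Proposition~\ref{prop:preseq}: it suffices to exhibit a unit $\ol t \in \ol H$ satisfying $\ol S^2(\ol t)\ol \g_F^{-1} = \ol t$.  Since $H$ has the Chevalley property, $\ol H$ is a finite-dimensional semisimple Hopf algebra over $\k$, and the image $\ol F \in \ol H \o \ol H$ of $F$ is again a twist.  By Larson--Radford \cite{LR88} we have $\ol S^2 = \id_{\ol H}$, and Theorem~\ref{t:1} applied to $\ol H$ yields $\ol S(\ol \b_F) = \ol \b_F$, whence
$$
\ol \g_F \;=\; \ol \b_F\, \ol S(\ol \b_F^{-1}) \;=\; \ol \b_F\, \ol \b_F^{-1} \;=\; 1.
$$
The equation then collapses to $\ol t = \ol t$, so $\ol t = 1$ does the job.

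The substantive work has already been done in Theorem~\ref{t:1} (via the canonical pivotal structure on pseudounitary fusion categories) and in Proposition~\ref{prop:preseq} (via the explicit description of $F^\piv(\RH)$ from Lemma~\ref{lem:Fpiv}); once these are in place, the only step not reducible to pure bookkeeping is the verification that $\FF_\sigma^\piv(\RH_{H^F}) \cong \RK$ for a Hopf isomorphism $\sigma$.  I expect this to be a direct unwinding of the description of $\FF_\sigma$ in Section~\ref{sect:ge} and of the regular object in the universal pivotalization in Section~\ref{sect:nuSh}, rather than a serious obstacle.
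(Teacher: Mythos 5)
Your proposal is correct and takes essentially the same route as the paper's own proof: reduce to $K=H^F$ via \cite[Thm. 2.2]{NS2}, show that in the semisimple quotient $\ol S^2=\id$ and $\ol \g_F=\ol 1$, and conclude with Proposition~\ref{prop:preseq}(iii)$\Rightarrow$(i). The only differences are cosmetic: the paper packages the application of Theorem~\ref{t:1} to $\ol H$ as Lemma~\ref{l:reduction} (i.e. $S(\b_F)\in \b_F+J(H)$) rather than re-deriving $\ol S(\ol\b_F)=\ol\b_F$ on the spot as you do, and you spell out the bookkeeping for the Hopf isomorphism $\sigma$ that the paper leaves implicit in its reduction step.
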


\begin{proof}
In view of \cite[Thm. 2.2]{NS2}, it suffices to assume $K = H^F$ for some twist $F \in H \o H$, and that $\FF$ is the associated equivalence $F: \C{H} \to \C{H^F}$.  Let $S$ be the antipode of $H$. It follows from Lemma \ref{l:reduction} that $\ol \g_F = \ol 1$ and $\ol S^2 = \id$. Therefore, every unit $t \in H/J(H)$ satisfies $\ol S^2(t)\ol \g_F^{-1}-t=0$. The proof is then completed by Proposition \ref{prop:preseq}.
\end{proof}

{ As a corollary we recover Theorem \ref{thm:KMN} for Hopf algebras with the Chevalley property.}

\begin{corollary}[{cf.~\cite[Thm. 2.2]{KMN}}]
If $\FF:H\mod\to K\mod$ is a gauge equivalence and $H$ has the Chevalley property then we have
\[
\nu^{\mathrm{KMN}}_n(H)=\nu^{\mathrm{KMN}}_n(K)
\]
for all $n\geq 0$.
\end{corollary}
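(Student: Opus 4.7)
The plan is to assemble the proof from results already established in the paper: Theorem~\ref{thm:presreg} provides the preservation of regular objects, and the remaining ingredients are the two key theorems of Shimizu reviewed in Section~\ref{sect:indicators}. The argument was in fact already sketched by the authors in the paragraph following Question~\ref{quest:shimizu}, so what remains is to record it cleanly as a corollary.

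First, I would apply Theorem~\ref{thm:presreg} to conclude that the induced pivotal equivalence $\FF^{\piv}:(\C{H})^{\piv}\to (\C{K})^{\piv}$ satisfies $\FF^{\piv}(\RH)\cong \RK$. Since $\FF^{\piv}$ is a pivotal (in particular monoidal) equivalence, it commutes with taking duals up to canonical isomorphism, so
\[
\FF^{\piv}(\RH^\ast)\cong \FF^{\piv}(\RH)^\ast \cong \RK^\ast
\]
as objects of $(\C{K})^{\piv}$.

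Next, I would invoke the Shimizu invariance theorem (\cite[Thm.~5.3]{shimizu15}), recalled in Section~\ref{sect:nuSh}, which asserts that for any monoidal equivalence the Shimizu indicators agree on corresponding pivotalized objects. Applied to the object $\RH^\ast$, this yields $\nus_n(\RH^\ast)=\nus_n(\FF^{\piv}(\RH^\ast))$. Combining with the isomorphism above and the fact that $\nus_n$ depends only on the isomorphism class in the pivotal category, we obtain $\nus_n(\RH^\ast)=\nus_n(\RK^\ast)$.

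Finally, I would use Shimizu's identification (\cite[Thm.~5.7]{shimizu15}), $\nus_n(\mathbf{R}^\ast_H)=\nuk_n(H)$ and $\nus_n(\mathbf{R}^\ast_K)=\nuk_n(K)$, to conclude
\[
\nuk_n(H)=\nus_n(\RH^\ast)=\nus_n(\FF^{\piv}(\RH^\ast))=\nus_n(\RK^\ast)=\nuk_n(K).
\]
There is no real obstacle at this stage of the paper: the entire difficulty was concentrated in proving Theorem~\ref{thm:presreg} (equivalently, in solving the Chevalley case of Question~\ref{quest:shimizu}); once that is available, the corollary is a formal consequence of the categorical framework of Section~\ref{sect:indicators}.
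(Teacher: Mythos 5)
Your proposal is correct and follows exactly the paper's own proof: Theorem~\ref{thm:presreg} gives $\FF^{\piv}(\RH)\cong\RK$, compatibility of the monoidal equivalence with duals gives $\FF^{\piv}(\RH^\ast)\cong\RK^\ast$, and then Shimizu's Theorems 5.3 and 5.7 yield the chain $\nuk_n(H)=\nus_n(\RH^\ast)=\nus_n(\RK^\ast)=\nuk_n(K)$. Nothing to add.
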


\begin{proof}
We have $\FF^{\piv}(\mathbf{R}_H)\cong \mathbf{R}_K$ by Theorem~\ref{thm:presreg}.  Since a gauge equivalence preserves duals  this implies $\FF^{\piv}(\mathbf{R}_H^\ast)\cong \mathbf{R}_K^\ast$ as well.  Hence, using~\cite[Thm. 5.3 \& 5.7]{shimizu15}, we have
\[
\nu^{\mathrm{KMN}}_n(H)=\nu_n^{\mathrm{Sh}}(\mathbf{R}^\ast_H)=\nu_n^{\mathrm{Sh}}(\mathbf{R}^\ast_K)=\nu^{\mathrm{KMN}}_n(K).
\qedhere
\]
\end{proof}

\appendix

\section{Functoriality of finite pivotalizations}

We adopt the notation introduced at the beginning of Section~\ref{sect:Npiv}.  Recall that the subcategory $\CCNp\subset \CCp$ is the full subcategory consisting of all pairs $(V,\phi_V)$ such that the associated automorphism $\underline{\phi_V}\in\Aut_\k(V)$ satisfies $\mathrm{ord}(\underline{\phi_V})|N$.

\begin{lemma}\label{lem:preservers}
Let $F\in H\ox H$ be a twists and consider the functor $F:\CC\to \CC_F$.  Then, for any $N$ divisible by $\ord(S^2)$, the following statements are equivalent:
\begin{enumerate}
\item[(i)] $F^{\upiv}$ restricts to an equivalence $F^{\Npiv}:\CCNp\to \CC_F^{\Npiv}$.
\item[(ii)] $\g^{(N)}_F=1$.
\end{enumerate}
Furthermore, the existence of an isomorphism $F^{\piv}(\mathbf{R}_H)\cong \mathbf{R}_{H^F}$ implies {\rm (i)} and {\rm (ii)} for all such $N$.
\end{lemma}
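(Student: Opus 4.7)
The plan is to extract everything from the explicit description of $F^{\piv}$ given in Lemma~\ref{lem:Fpiv}, which identifies the pivotal automorphism of $F^{\piv}(V,\phi_V)$ with the $\k$-linear endomorphism $l(\g_F)\circ\underline{\phi_V}$ of $V$. The engine of the proof is the iteration identity
$$
\bigl(l(\g_F)\circ\underline{\phi_V}\bigr)^k \;=\; l(\g_F^{(k)})\circ \underline{\phi_V}^k \qquad (k\ge 1).
$$
To prove it, I would first observe that the condition that $\phi_V$ be a morphism $V\to V^{**}$ in $\CC$ is exactly the intertwining relation $\underline{\phi_V}\circ l(h)=l(S^2(h))\circ\underline{\phi_V}$ for every $h\in H$. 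A routine induction, sliding each $\underline{\phi_V}$ past the succeeding $l(\g_F)$ at the cost of applying $S^2$, then collects the product $\g_F\cdot S^2(\g_F)\cdots S^{2(k-1)}(\g_F)=\g_F^{(k)}$ on the left and leaves $\underline{\phi_V}^k$ on the right.

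Granting the iteration identity, the implication (ii)$\Rightarrow$(i) is immediate: if $\g_F^{(N)}=1$ and $(V,\phi_V)\in\CCNp$, then $\underline{\psi_V}^N=l(\g_F^{(N)})\underline{\phi_V}^N=\id$, so $F^{\piv}$ restricts to a functor $\CCNp\to \CC_F^{\Npiv}$; full faithfulness of the restriction is inherited from $F^{\piv}\colon\CCp\to\CC_F^{\piv}$ since both $\CCNp$ and $\CC_F^{\Npiv}$ are full subcategories; and essential surjectivity follows by reversing the argument, namely a preimage $(V,\phi_V)\in\CCp$ of an object in $\CC_F^{\Npiv}$ satisfies $\underline{\psi_V}^N=\id$, and this combined with $\g_F^{(N)}=1$ forces $\underline{\phi_V}^N=\id$ via the same identity. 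For (i)$\Rightarrow$(ii), I apply the restriction to the regular object $\mathbf{R}_H=(H,\jj\circ S^2)$, which lies in $\CCNp$ because $\ord(S^2)\mid N$; the identity gives $\underline{\psi_H}^N=l(\g_F^{(N)})\circ S^{2N}=l(\g_F^{(N)})$, and for this to be $\id_H$ on the regular module one needs $\g_F^{(N)}=1$.

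For the closing claim, Proposition~\ref{prop:preseq} converts the hypothesis $F^{\piv}(\mathbf{R}_H)\cong\mathbf{R}_{H^F}$ into the existence of a unit $t\in H$ satisfying $S^2(t)=t\g_F$. A short induction, obtained by applying $S^2$ repeatedly to this equation and re-assembling the resulting product using the definition of $\g_F^{(k)}$, yields $S^{2k}(t)=t\,\g_F^{(k)}$ for all $k\ge 1$. Specialising to $k=N$ and invoking $S^{2N}=\id$ gives $t=t\g_F^{(N)}$, whence $\g_F^{(N)}=1$ as $t$ is a unit; so both (ii) and (i) hold for every admissible $N$. The only step requiring any real care is the iteration identity itself, since $l(\g_F)$ and $\underline{\phi_V}$ do not commute on a general $V$, and it is precisely the $H$-linearity intertwining for $\phi_V$ that allows the iterated composition to collapse into the clean form $l(\g_F^{(k)})\circ\underline{\phi_V}^k$.
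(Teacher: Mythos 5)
Your proposal is correct, and its core --- the identification $\underline{\phi_{F^{\piv}(V,\phi_V)}}=l(\g_F)\circ\underline{\phi_V}$ from Lemma~\ref{lem:Fpiv} together with the iteration identity $(l(\g_F)\circ\underline{\phi_V})^k=l(\g_F^{(k)})\circ\underline{\phi_V}^k$, justified by the intertwining relation $\underline{\phi_V}\circ l(h)=l(S^2(h))\circ\underline{\phi_V}$ --- is exactly the paper's argument for (i)$\Leftrightarrow$(ii), including the application to $\mathbf{R}_H$ for (i)$\Rightarrow$(ii). Two points of divergence are worth noting. First, you handle essential surjectivity of the restricted functor explicitly (reversing the iteration identity to show a $\CCp$-preimage of an object of $\CC_F^{\Npiv}$ already lies in $\CCNp$); the paper leaves this implicit, so your version is slightly more complete on this point. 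Second, for the closing claim the paper stays inside the lemma's own machinery: since $\mathbf{R}_{H^F}$ lies in every $\CC_F^{\Npiv}$ and membership in that full subcategory is isomorphism-invariant, an isomorphism $F^{\piv}(\mathbf{R}_H)\cong\mathbf{R}_{H^F}$ forces $l(\g_F^{(N)})=\id_H$ via the same identity applied to $\mathbf{R}_H$, whence $\g_F^{(N)}=1$. You instead route through Proposition~\ref{prop:preseq}, extracting a unit $t$ with $S^2(t)=t\g_F$ and iterating to get $t=t\g_F^{(N)}$. Both are valid and non-circular (Proposition~\ref{prop:preseq} is proved independently of this lemma); the paper's route is more economical, while yours makes the connection with the unit-equation characterization of $F^{\piv}(\mathbf{R}_H)\cong\mathbf{R}_{H^F}$ explicit, at the cost of invoking an external result where none is needed.
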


\begin{proof}
Consider any $(V,\phi_V)$ in $\mathscr{C}^{\Npiv}$.  We have $F^{\piv}(V,\phi_V)=(V,\jj\circ l(\gamma_F)\circ \underline{\phi_V})$, by Lemma~\ref{lem:Fpiv}.  So $\underline{\phi_{F^{\piv}(V,\phi_V)}}=l(\gamma_F)\circ \underline{\phi_V}$.  Since $\underline{\phi_V}$, considered as an $H$-module map, is a map from $V$ to $\FF_{S^2}(V)$, we find by induction that
\[
(l(\gamma_F)\circ\underline{\phi_V})^n=l(\gamma_F^{(n)})\circ \underline{\phi_V}^n
\]
for each $n$.  In particular,
\begin{equation}\label{eq:988}
(l(\gamma_F)\circ \underline{\phi_V})^N=l(\gamma_F^{(N)})
\end{equation}
since $\underline{\phi_V}^N=1$.
\par

From equation~\eqref{eq:988} we see that $F^{\piv}(V,\phi_V)$ lay in $\mathscr{C}_F^{\Npiv}$ if and only if $l(\gamma_F^{(N)})=\id_V$.  Whence we have the implication (ii)$\Rightarrow$(i).  Applying \eqref{eq:988} to the case $(V,\phi_V)=\mathbf{R}_H$ gives the converse implication (i)$\Rightarrow$(ii) as well as the implication $F^{\piv}(\mathbf{R}_H)\cong \mathbf{R}_{H^F}\Rightarrow$(ii), since $\mathbf{R}_{H^F}$ is in each $\mathscr{C}_F^{\Npiv}$.
\end{proof}

We can now give the

\begin{proof}[Proof of Proposition~\ref{prop:Npiv}]
{In view of \cite[Thm. 2.2]{NS2}, it suffices to assume $K = H^F$ for some twist $F \in H \o H$ and consider the monoidal equivalence $F: \C{H} \to \C{H^F}$.
\par

For Hopf algebras with the Chevalley property:  Recall $\mathrm{ord}(S^2)=\mathrm{ord}(S^2_F)$ by Corollary~\ref{cor:ordpres}.  So we can pivotalize both $H$ and $H^F$ with respect to any $N$ divisible by $\mathrm{ord}(S^2)$.  We have already seen that $F^{\piv}(\mathbf{R}_H)\cong\mathbf{R}_{H^F}$.  It follows, by Lemma~\ref{lem:preservers}, that $F^{\piv}$ restricts to an equivalence $F^{\Npiv}:\mathscr{C}^{\Npiv}\to \mathscr{C}_F^{\Npiv}$.
\par

The general case:  From~\cite[Prop. 3.2]{EGqexp02} and the proof of~\cite[Prop. 3.3]{EGqexp02} we have $\gamma_F^{(\qexp(H))}=1$.  By Lemma \ref{lem:preservers} it follows that $F^\piv$ restricts to an equivalence $F^{\Epiv}:\CCEp\to \CC_F^{\Epiv}$.}
\end{proof}
\subsection*{Acknowledgements}
The authors would like to thank the referee for his careful reading and suggestions on the previous version of this paper.
\bibliographystyle{abbrv}

\end{document}